\documentclass[11pt, reqno]{amsart}
\usepackage{cite}
\usepackage{pgfplots}
\usepgfplotslibrary{polar}
\pgfplotsset{compat=newest}
\usepackage[margin=1 in]{geometry}
\usepackage{tikz}
\usepackage{tikz-cd}
\usetikzlibrary{matrix,decorations.pathreplacing}
\usepackage{dsfont}
\usepackage{amssymb, amscd,amsmath}
\usepackage[all]{xy}
\usepackage{mathrsfs}
\usepackage{calc}
\usepackage{amsthm}
\theoremstyle{definition}

\newtheorem{theorem}{Theorem}[section]

\newtheorem{lemma}[theorem]{Lemma}

\newtheorem{prop}{Proposition}[section]
\newcommand*\Z{\mathbb{Z}}
\newcommand*\R{\mathbb{R}}

\newcommand{\A}{\mathbb{A}}
\newcommand{\UH}{\mathbb{H}}
\newcommand{\Q}{\mathbb{Q}}

\newcommand{\M}{\mathbb{M}}

\newcommand{\SL}{\mathrm{SL}}
\newcommand{\PSL}{\mathrm{PSL}}

\newcommand{\Pro}{\mathbb{P}}
\newcommand*\CC{\mathbb{C}}

\DeclareMathOperator{\Hom}{Hom}

\begin{document}

\title{Modular Symbols for Fermat curves}
\author{Ozlem Ejder}
\address{Department of Mathematics\\
   Colorado State University\\
  Fort Collins, Colorado 80523}
\email{ejder@math.colostate.edu}

\begin{abstract}
Let $F_n$ denote the Fermat curve given by $x^n+y^n=z^n$ and let $\mu_n$ denote the Galois module of $n$th roots of unity. It is known that the integral homology group $H_1(F_n,\Z)$ is a cyclic $\Z[\mu_n\times \mu_n]$ module. In this paper, we prove this result using modular symbols and the modular description of Fermat curves; moreover we find a basis for the integral homology group $H_1(F_n,\Z)$. We also construct a family of Fermat curves using the Fermat surface and compute its monodromy. 

\end{abstract}
\maketitle

\vspace{1 mm}
\section{Introduction}
  
  Belyi \cite{Belyi} shows that for any smooth projective curve $C/\bar{\Q}$, there exists a finite index subgroup $G$ of 
  $\SL_2(\Z)$ such that the quotient of the upper half plane by the action of $G$ is isomorphic to $C$. As an example, Fermat's cubic curve given by the equation $x^3+y^3=z^3$ is isomorphic to $X_0(27)$; hence in this case $G$ given in Belyi's theorem can be taken as $\Gamma_0(27)$. Similarly Fermat's quartic $x^4+y^4=z^4$ is isomorphic to the modular curve $X_0(64)$ \cite{Kenku64}. A natural question arises here: \textit{Given $n$, can we describe $G_n$; a subgroup of $\SL_2(\Z)$  such that that $X_{G_n}$ is isomorphic to the Fermat curve $F_n: x^n+y^n=z^n$?} This is done in \cite{Rohrlich}; Rohrlich gives a modular description of Fermat curves in terms of certain subgroups so called Fermat groups $\Phi(n)$ of $\Gamma(2)$. Using this description he computes the rank of the subgroup of the Mordell-Weil group $J(F_n)(\bar{\Q})$ generated by divisors supported on cusps (i.e., points $(x:y:z)$ with  $xyz=0$). 
      
The homology of modular curves can be described using modular symbols \cite{Manin}. A modular symbol is the image of a geodesic from $\alpha$ to $\beta$ in $X_G(\CC)$ for some $\alpha,\beta$ in $\Pro^1(\Q)$ and it is denoted by $\{\alpha,\beta\}$. Modular symbols are used as a tool to compute modular forms \cite{Stein} where the action of the Hecke algebra is more explicit. We will use modular symbols to describe the homology of Fermat curves.

The homology of Fermat curves has been computed by various people. Let $\epsilon_0,\epsilon_1$ be the automorphisms of $F_n$ given by 
\[ \epsilon_0:  (x:y:z)\mapsto (\zeta_n x: y:z), \hspace{2mm} \epsilon_1:  (x:y:z)\mapsto ( x: \zeta_ny:z),
\]
where $\zeta_n$ denotes a primitive $n$th root of unity. Let $\mu_n\times \mu_n$ be the subgroup of the automorphism group of $F_n$ generated by $\epsilon_0$ and $\epsilon_1$. In the appendix to the paper by Gross \cite{Gross}, Rohrlich showed that $H_1(F_n,\Z)$ is a cyclic $\Z[\mu_n\times \mu_n]$ module generated by the Pochhammer contour $\kappa=-(1-\epsilon_0)(1-\epsilon_1)\gamma$ where $\gamma :[0,1] \to F_n(\CC)$ is given by $t \mapsto (t^{1/n}:(1-t^n)^{1/n}:1)$. 
  In \cite[Proposition~B]{Lim}, Lim carried out this calculation and computed a basis $\{ \epsilon_0^i\epsilon_1^jg \mid 0\leq i \leq n-2, \hspace{2mm}   0\leq j \leq n-3\}$ for $H_1(F_n,\Z)$ using the generator $g$ described as: 
\begin{equation}\label{eq:g}
    g= \left\{\begin{array}{lr}
       \epsilon_0^{(n-1)/2} \epsilon_1^{(n-1)/2}(1-\epsilon_0)(1-\epsilon_1)\gamma, & \text{for }  \text{n odd} \\\\
       (1-\epsilon_0^{n-1})(1-\epsilon_1^{n-1})\gamma, & \text{for } \text{n even}\\
            \end{array}\right\}. 
              \end{equation}
 Notice here that the image of $\gamma$ remains in the affine curve $U(\CC)$ given by $x^n+y^n=1$.

In \cite{Anderson}, Anderson (using motivic homology) studied the relative homology (with $\Z/n$ coefficients) of the affine Fermat curve $U: x^n+y^n=1$ with respect to cusps $\{(x,y) \in U(\CC) \mid xy=0\}$. He showed that $H_1(U,\{xy=0\};\Z/n)$ is a free $\Z/n[\mu_n\times \mu_n]$ module generated by $\gamma$ which we described above. See also \cite{DPSWwin} for another description of the homology of $F_n$.   
  In this paper we take a new approach. Using Rohrlich's description of Fermat curves, we compute the modular symbols for the Fermat group $\Phi(n)$ which equivalently gives us the basis for the homology group $H_1(F_n,\{xyz=0\};\Z)$ of the Fermat curve $x^n+y^n=z^n$ relative to the set of cusps and also allows us to compute a basis for the integral homology of $F_n$. Our main results are the following:
     
  Let $\phi$ denote the automorphism of $F_n$ given by $(x:y:z) \mapsto (z:\epsilon x:y)$ where $\epsilon$ is a primitive $2n$th root of unity (namely $e^{\pi/n}$ ) and let $\bar{\gamma}$ denote the path we obtain by the action of $\phi$ on $\gamma$, i.e., $\bar{\gamma} : [0,1] \to F_n(\CC)$ is given by $t \mapsto (1:\epsilon(1-t^n)^{1/n}:t^{1/n} )$.
  \begin{theorem} \label{thm:modular symbols}\hfill
  \begin{enumerate}
  \item The relative homology group $H_1(F_n,\{xyz=0\};\Z)$ is generated by $\gamma$ and $\bar{\gamma}$ as a $\Z[\mu_n\times \mu_n]$ module with the relation 
  \begin{align} \label{relation e}
   \epsilon_0\gamma+  \epsilon_0\epsilon_1\bar{\gamma}= \gamma+ \bar{\gamma}.
  \end{align}
  \item A basis of  $H_1(F_n,\{xyz=0\};\Z)$ is given by the elements
       \[  \{  \epsilon_0^i\epsilon_1^j\bar{\gamma} \mid 1\leq i \leq n-1, 0\leq j \leq n-1\},  \hspace{2mm}
              \{  \epsilon_0^{n-1}\epsilon_1^j\gamma \mid 0\leq j \leq n-1 \},
                 \hspace{1mm}  \text{and}  \hspace{2mm} \{ \epsilon_1\bar{\gamma}\}. 
                  \]
\end{enumerate}
  \end{theorem}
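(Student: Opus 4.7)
The plan is to use Manin's theory of modular symbols combined with Rohrlich's presentation $F_n\cong\Phi(n)\backslash\UH^*$, under which the covering $F_n\to X(2)$ has Galois group $\Gamma(2)/\Phi(n)\cong\mu_n\times\mu_n$ acting as $\langle\epsilon_0,\epsilon_1\rangle$. Since $X(2)\cong\Pro^1$ has three cusps $\{0,1,\infty\}$, each pulls back to $n$ cusps on $F_n$, and the three resulting orbits correspond respectively to the coordinate divisors $\{x=0\}$, $\{y=0\}$, $\{z=0\}$. Geodesics in $\UH^*$ between distinct cusps of $X(2)$ come in three types (one per pair of cusps on $\Pro^1$); after lifting to $F_n$, $\gamma$ realizes the $\{x=0\}$-to-$\{y=0\}$ type, while $\bar\gamma$ (essentially $\phi(\gamma)$) realizes the $\{z=0\}$-to-$\{y=0\}$ type, with $\phi$ induced by a cyclic permutation of the three cusps of $X(2)$.

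For part (1), I would invoke the standard modular-symbol presentation: $H_1(F_n,\{xyz=0\};\Z)$ is generated by $\Phi(n)$-orbits of cusp-to-cusp geodesics, modulo three-term relations coming from the boundaries of ideal triangles in a $\Gamma(2)$-triangulation of $\UH^*$. The ideal triangle with vertices $0,1,\infty$ lifts to $n^2$ ideal triangles on $F_n$, transitively permuted by $\mu_n\times\mu_n$. Reading off the boundary of one such triangle expresses the ``third type'' edge (a $\{x=0\}$-to-$\{z=0\}$ geodesic) as a $\Z[\mu_n\times\mu_n]$-combination of $\gamma$ and $\bar\gamma$; equating this with the expression coming from the adjacent triangle that shares the same third-type edge produces exactly the relation (\ref{relation e}). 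Transitivity of the $\mu_n\times\mu_n$-action on the set of triangles implies that every further boundary relation is a $\mu_n\times\mu_n$-translate of this one, so (\ref{relation e}) yields a complete presentation.

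For part (2), starting from (1), I would use (\ref{relation e}) and its $\mu_n\times\mu_n$-translates to trim the obvious spanning set $\{\epsilon_0^i\epsilon_1^j\gamma,\,\epsilon_0^i\epsilon_1^j\bar\gamma\mid 0\le i,j\le n-1\}$ of size $2n^2$. Rewriting (\ref{relation e}) as $(\epsilon_0-1)\gamma=(1-\epsilon_0\epsilon_1)\bar\gamma$, one solves inductively for $\epsilon_0^i\epsilon_1^j\gamma$ with $0\le i\le n-2$ in terms of $\bar\gamma$-translates and the ``boundary'' generators $\epsilon_0^{n-1}\epsilon_1^j\gamma$. A dual application of the same relation lets one discard $\epsilon_1^j\bar\gamma$ for all $j$ except a single normalizing index (taken here to be $j=1$). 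The surviving set has cardinality $(n-1)n+n+1=n^2+1$, matching the rank of $H_1(F_n,\{xyz=0\};\Z)=2g(F_n)+3n-1=n^2+1$ obtained from the long exact sequence of the pair $(F_n,\{xyz=0\})$; equality of cardinality with the rank upgrades the spanning set to a basis.

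The main obstacle is the explicit derivation of (\ref{relation e}) in step (1). One must select coset representatives for $\Phi(n)\backslash\Gamma(2)$ compatible with Rohrlich's isomorphism, identify how $\epsilon_0$ and $\epsilon_1$ permute the $n^2$ triangles, and carry out the boundary computation with correct orientations and signs so that the ``third type'' edge cancels in exactly the asymmetric way predicted by (\ref{relation e}). Once (\ref{relation e}) is established, the elimination in (2) reduces to bookkeeping in $\Z[\mu_n\times\mu_n]$, but care is required at the boundary indices $i\in\{0,n-1\}$ and $j\in\{0,1,n-1\}$ to ensure that precisely the stated basis, and not a permuted variant, survives.
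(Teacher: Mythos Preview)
Your approach is essentially the paper's: both establish the theorem via Manin's modular-symbol presentation for $\Phi(n)$, then translate to the Fermat curve under $A\leftrightarrow\epsilon_0$, $B^{-1}\leftrightarrow\epsilon_1$, $\tau\leftrightarrow\phi$. The paper carries out explicitly the computation you flag as the main obstacle: it lists the coset representatives $A^iB^j\alpha_k$ of $\Phi(n)$ in $\PSL_2(\Z)$, works out the $\sigma$- and $\tau$-relations among the associated Manin symbols, and combines the two resulting $\tau$-relations into the single identity $[A^{i+1}B^j]+[A^{i+1}B^{j-1}\tau]=[A^iB^j]+[A^iB^j\tau]$, which becomes~(\ref{relation e}) after translation.

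One small correction to your geometric picture: there are \emph{two} $\Gamma(2)$-orbits of Farey triangles (since $[\PSL_2(\Z):\Gamma(2)]/|\langle\tau\rangle|=6/3=2$), so $F_n$ carries $2n^2$ ideal triangles in two $\mu_n\times\mu_n$-orbits, not $n^2$ in one; the ``adjacent triangle sharing the third-type edge'' that you invoke lies in the second orbit, which is exactly why the paper has two separate $\tau$-relations (its equations (\ref{tau1}) and (\ref{tau2})) to combine. For part~(2) the paper performs an explicit row reduction on the $n^2\times 2n^2$ relation matrix rather than your rank count $2g(F_n)+3n-1=n^2+1$ via the long exact sequence of the pair; both arguments are valid, and yours is a clean shortcut once (\ref{relation e}) and the spanning are established.
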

  
  Here the image of $\gamma$ remains in the affine curve $F_n \cap  (z\neq 0)$ whereas the image of $\bar{\gamma}$ lives in the affine curve $F_n \cap (x\neq 0)$. The relation we determine between $\gamma$ and $\bar{\gamma}$ describes how the relative homologies of these affine curves patch in $H_1(F_n, \{xyz=0\},\Z)$.
  
  \begin{theorem} \label{thm:cuspidal} 
  A basis for the integral homology $H_1(F_n,\Z)$ of $F_n$ is given by the following elements:
   \[
   \{ \epsilon_0^i\epsilon_1^j (1-\epsilon_0)(1-\epsilon_1)\gamma \hspace{2mm} \mid 1 \leq i \leq n-2, \hspace{2mm} 0\leq j\leq n-2 \}
  \]
  \end{theorem}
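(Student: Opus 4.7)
The plan is to work inside the relative homology $V:=H_1(F_n,\{xyz=0\},\Z)$, exploiting the basis of Theorem~\ref{thm:modular symbols}(2) and the injection $H_1(F_n,\Z)\hookrightarrow V$ that comes from the long exact sequence of the pair (the homology of a finite point set vanishes in degree one). Writing $v_{i,j}:=\epsilon_0^i\epsilon_1^j(1-\epsilon_0)(1-\epsilon_1)\gamma$, the goal is to show that the $(n-1)(n-2)$ elements $\{v_{i,j}:(i,j)\in[1,n-2]\times[0,n-2]\}$, together with a suitable set $E$ of $3n-1$ auxiliary elements of $V$, form a $\Z$-basis of $V$, and that the boundary map $\partial:V\to H_0(\{xyz=0\},\Z)$ is injective on $\Z E$. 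These two facts then yield that $H_1(F_n,\Z)=\ker\partial$ coincides with the $\Z$-span of the $v_{i,j}$'s.

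For the first fact, multiply the relation $(\epsilon_0-1)\gamma=(1-\epsilon_0\epsilon_1)\bar\gamma$ of Theorem~\ref{thm:modular symbols}(1) by $(1-\epsilon_1)$ to obtain the identity
\[
(1-\epsilon_0)(1-\epsilon_1)\gamma=-(1-\epsilon_1)(1-\epsilon_0\epsilon_1)\bar\gamma\quad\text{in }V,
\]
so that
\[
v_{i,j}=-\epsilon_0^i\epsilon_1^j\bar\gamma+\epsilon_0^i\epsilon_1^{j+1}\bar\gamma+\epsilon_0^{i+1}\epsilon_1^{j+1}\bar\gamma-\epsilon_0^{i+1}\epsilon_1^{j+2}\bar\gamma.
\]
For $(i,j)\in[1,n-2]\times[0,n-2]$ each summand belongs to the subset $B_1=\{\epsilon_0^a\epsilon_1^b\bar\gamma:1\le a\le n-1,\,0\le b\le n-1\}$ of the basis in Theorem~\ref{thm:modular symbols}(2) (when $j=n-2$, the last summand uses $\epsilon_1^n=1$). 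The leading term of $v_{i,j}$ in the lexicographic order on $(a,b)$ is $-\epsilon_0^i\epsilon_1^j\bar\gamma$, so the $(n-1)(n-2)\times(n-1)(n-2)$ coefficient submatrix obtained by keeping the columns $(a,b)\in[1,n-2]\times[0,n-2]$ is upper triangular with $-1$ on its diagonal, hence unimodular. Replacing those columns by the $v_{i,j}$'s gives a new $\Z$-basis of $V$ whose remaining $3n-1$ elements form the set
\[
E=\{\epsilon_0^{n-1}\epsilon_1^b\bar\gamma\}_{b=0}^{n-1}\cup\{\epsilon_0^a\epsilon_1^{n-1}\bar\gamma\}_{a=1}^{n-2}\cup\{\epsilon_0^{n-1}\epsilon_1^j\gamma\}_{j=0}^{n-1}\cup\{\epsilon_1\bar\gamma\}.
\]

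For the second fact I would compute $\partial$ on each $e\in E$. Labelling cusps as $P_k=(1{:}0{:}\zeta_n^k)$, $Q_k=(0{:}\zeta_n^k{:}1)$, $R_k=(1{:}\epsilon\zeta_n^k{:}0)$, one has $\partial\gamma=P_0-Q_0$ and $\partial\bar\gamma=P_0-R_0$, and equivariance gives the boundaries of the $3n-1$ elements of $E$ as specific cusp differences. A direct inspection of the quotient $\Z^{3n}/\langle\partial E\rangle$ shows that the relations $P_1\equiv R_k$, $P_1\equiv Q_j$, $P_{-a}\equiv R_{n-1-a}$, and $P_0\equiv R_1$ collapse all $3n$ cusps onto $P_1$, so $\partial|_{\Z E}$ has rank $3n-1$ and is thus injective. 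Consequently every $h=\sum c_{i,j}v_{i,j}+\sum_{e\in E}d_e e$ in $\ker\partial$ satisfies $d_e=0$ for all $e\in E$, yielding $h\in\Z\text{-span}\{v_{i,j}\}$. I expect the main technical obstacle to be precisely this last combinatorial verification of the boundary relations; once it is done, the upper-triangular change-of-basis assembly is routine.
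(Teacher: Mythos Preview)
Your approach is correct and close in spirit to the paper's, though the logical organization differs. The paper first proves an intermediate result (Proposition~\ref{prop:cuspidal}): the elements $\gamma_{i,j}=y_{i,0}-y_{i+1,n-1}+y_{i+1,j}-y_{i,j+1}$ (in the $y_{a,b}=[A^aB^b\tau]\leftrightarrow\epsilon_0^a\epsilon_1^{-b}\bar\gamma$ notation) are shown to have boundary zero by a cusp computation, to be independent by a leading-term argument in the $y$-basis, and hence to form a basis of $H_1$ by counting against the genus $(n-1)(n-2)/2$. A final change of basis $\gamma_{i,j}\mapsto\gamma_{i,j+1}-\gamma_{i,j}$ and the relation~\eqref{relation} then convert this into the $(1-\epsilon_0)(1-\epsilon_1)\gamma$ form. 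Your route is more direct: you go straight to the $v_{i,j}$'s, express them via the same relation in the $\bar\gamma$-basis, and use the same leading-term triangularity. In place of the genus count you complete to a full basis $\{v_{i,j}\}\cup E$ of $V$ and verify that $\partial$ is injective on $\Z E$; this actually pins down $\ker\partial$ completely, without appealing to the genus of $F_n$. The cusp computation you sketch (collapsing all $3n$ cusps onto $P_1$) is correct and goes through exactly as you describe.

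One small point worth making explicit: your final deduction ``$h\in\ker\partial\Rightarrow d_e=0$'' uses not only that $\partial|_{\Z E}$ is injective but also that $\partial v_{i,j}=0$. You never state this, and without it the conclusion fails. Of course it is immediate---$(1-\epsilon_0)(1-\epsilon_1)\gamma$ is the Pochhammer contour, and a two-line check shows its boundary vanishes---but it should be recorded so the argument is self-contained.
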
 
 Moreover, the relation in \eqref{relation e} gives us that $(1+\epsilon_j +\epsilon_j^2+\ldots+\epsilon_j^{n-1} )$ for $j=1,2$ and $(1+\epsilon_0\epsilon_1 +\epsilon_0^2\epsilon_1^2+\ldots+\epsilon_0^{n-1}\epsilon_1^{n-1} )$ in $\Z[\mu_n\times \mu_n]$ operate as zero on $H_1(F_n, \Z)$.

  Understanding the $\mu_n\times \mu_n$ action on the homology groups is important for many reasons.  In section~\ref{Fermat surfaces}, we give an example of such a reason; considering the rational map from the Fermat surface $x^n+y^n+z^n=w^n$ to $\Pro^1$ given by $(x:y:w:z) \mapsto (w:z)$, we construct a fibration where the fibers are isomorphic to Fermat curves. We show that the monodromy action on the homology of the fibers of this family is induced by the automorphism $\epsilon_0\epsilon_1:(x:y:z) \mapsto (\zeta_nx:\zeta_ny:z)$. In section~\ref{monodromy}, we determine the action of $\epsilon_0$ and $\epsilon_1$ on the generators of the homology given in Theorem~\ref{thm:cuspidal} which describes the $\mu_n\times \mu_n$ action on $H_1(F_n,\Z)$ (Theorem~\ref{thm:action}).  

\section{Acknowledgements}
We thank Rachel Pries and the anonymous referee for the valuable comments on an earlier version of this article.
\section{Background}\label{background}
  \subsection{Modular Interpretation of Fermat curves}\label{Fermat}

      Let $\bar{\mathfrak{H}}$ denote the extended upper half plane  $\mathfrak{H}\hspace{.5mm} \cup \hspace{.5mm}\Pro^1(\Q)$.  An element $\begin{bmatrix} a&b\\ c&d \end{bmatrix}$ of $\SL_2(\Z)$ acts on $\bar{\mathfrak{H}}$ via fractional linear transformations:
       \[ 
     \begin{bmatrix} a&b\\ c&d \end{bmatrix}   .z= \frac{az+b}{cz+d}
        \]
 Since the action of $SL_2(\Z)$ on  $\bar{\mathfrak{H}}$ factors through $\PSL_2(\Z)$, we abuse the notation and denote a subgroup $G$ of $\SL_2(\Z)$ and its image in $\PSL_2(\Z)$ both by $G$.
  
  Let $G$ be a finite index subgroup of $\PSL_2(\Z)$. The orbit space $X_{G}(\CC)=\bar{\mathfrak{H}}/G$ has a natural structure of a smooth compact complex space of topological dimension $2$ and hence also a structure of an irreducible projective algebraic curve. 
  
  We call a point on $X_{G}$ cuspidal (or a cusp) if it is in the image of the subset $\Pro^1(\Q)$ of $\bar{\mathfrak{H}}$. We also denote the point at infinity on $\bar{\mathfrak{H}}$ by $i\infty$.
   
 In the following, we describe $\Phi(n)$, subgroups of $\PSL_2(\Z)$ such that  for every integer $n \geq1$, the curve $X_{\Phi(n)}$ is isomorphic to the Fermat curve                   
                   \[ 
                    x^n+y^n=z^n.
                     \]
 We first describe the principle congruence subgroup $\Gamma(2)$:
                      \[
                   \Gamma(2)= \left\{\begin{bmatrix} a&b\\ c&d \end{bmatrix} \in \SL_2(\Z): a,d \equiv 1 
                   \hspace{1mm} \text{mod} \hspace{1mm} 2 \hspace{1mm} \text{and} \hspace{1mm} b,c 
                   \equiv 
                0 \hspace{1mm} \text{mod} \hspace{1mm} 2 \right\}. 
                   \]
   It is well known that $\Gamma(2)$ is generated by $A$ and $ B$ (inside $\PSL_2(\Z)$) where
                  \[
                   A:=\begin{bmatrix} 1&2\\
                  0&1  \end{bmatrix}  \hspace{2mm} \text{and} \hspace{2mm} B:=\begin{bmatrix} 1&0\\
                  2&1  \end{bmatrix}.
                  \]          
      Let $\Phi(n)=\langle A^n, B^n, \Gamma(2)' \rangle$ be the subgroup of $\Gamma(2)$ generated by 
   $A^n, B^n$, and the commutator subgroup $\Gamma(2)'$ of $\Gamma(2)$. Then the inclusion
   $\Phi(n)   \hookrightarrow \Gamma(2) $ of groups induces a morphism of curves:
                   \[ 
                      X_{\Phi(n)} \longrightarrow X_{\Gamma(2)}. 
                   \]
     Rohrlich proves in \cite{Rohrlich} that the modular curve $X_{\Phi(n)}$ is isomorphic to the Fermat curve 
              \[   
              F_n : x^n+y^n=z^n.
              \]
    It is also known that the modular curve $X_{\Gamma(2)}=X(2)$ is a genus $0$ curve. The isomorphism $X(2) \to \Pro^1$ is given by lambda function $\lambda$ and it has the following values at the cusps $0,1$ and $i\infty$ of $X(2)$ (\cite[pg,116]{Chandrasekharan}):
      \[ 
      \lambda(0)=1, \hspace{2mm}  \lambda(1)=\infty , \hspace{2mm}  \lambda(i\infty)=0 
      \]
    Here the $\lambda$ values $1,\infty$ and $0$ represent the points $[1:1],  [1:0], [0:1]$. Moreover, the map $ X_{\Phi(n)} \longrightarrow  X(2)$ corresponds  to the projection map (see \cite[Section 5.5, pg~16]{Long}):
                      \[
                       g: F_n \longrightarrow \Pro^1 
                      \]
                       \[ 
                       (x:y:z) \mapsto (x^n:z^n) 
                       \]                   
   The morphism $g$ has degree $n^2$ with ramification above the points $[0:1],[1:1]$ and $[1:0]$ of $\Pro^1$. Above each of the points $[0:1],[1:1]$ and $[1:0]$, the morphism $g$ has ramification degree $n$, hence we can conclude that the modular curve $X_{\Phi(n)}$ has $3n$ cusps and there are exactly $n$ of them lying above each of the points $[0:1],[1:1]$ and $[1:0]$ in $\Pro^1$. 
   
To summarize, the cuspidal points on $F_n$; the points in the image of the cusps of $X_{\Phi(n)}$, are the points $[x:y:z]$ with $xyz=0$ which are known as the points at infinity \cite{Rohrlich}.

     
  \subsection{Cusps of {$X_{\Phi(n)}$}}\label{cusps}
   
     Define $a_k,b_k$, and $c_k$ for $1\leq k\leq n$ as:
      \[
       a_k=(0:\zeta^k:1), \hspace{2mm} b_k=(\zeta^k:0:1), \hspace{2mm} c_k=(\epsilon\zeta^k:1:0),
       \]
      where $\epsilon=e^{{\pi i}/n}$ and $\zeta$ is a primitive $n$'th root of unity.  We will show that under the isomorphism between $X_{\Phi(n)}$ and $F_n$, the subset $\{ A^j.1 \mid j=1,\ldots,n\}$ of $X_{\Phi(n)}(\CC)$ maps to the subset $\{c_k \mid k=1,\ldots,n\}$ of $F_n(\CC)$ using the following lemma.
      
       \begin{lemma}\label{lem:actionon1}
         Let $\pi$ denote the quotient map $\bar{\mathfrak{H}} \rightarrow  \bar{\mathfrak{H}}/ \Phi(n)$. 
         Then 
         \begin{enumerate}
         \item  The action of $A$ and $B$ on $X_{\Phi(n)}$ commute.     \label{commute}          
          \item  We have \label{A=B}
        $ \pi(A^k.1)=\pi(B^k.1)$ for each $k \in \Z$.

                   \end{enumerate}
    \end{lemma}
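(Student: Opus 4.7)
The strategy rests on the structural fact that $\Phi(n)$ is a normal subgroup of $\Gamma(2)$ with abelian quotient. Since $\Phi(n)$ contains the commutator subgroup $\Gamma(2)'$, the quotient $\Gamma(2)/\Phi(n)$ is a quotient of the abelianization $\Gamma(2)/\Gamma(2)'$, and normality is automatic. Because $\Gamma(2) \subset \PSL_2(\Z)$ is free on $A$ and $B$, this abelianization is $\Z^2$, so $\Gamma(2)/\Phi(n) \cong (\Z/n)^2$, generated by the images of $A$ and $B$. In particular, $\Phi(n) \trianglelefteq \Gamma(2)$ gives a well-defined induced action of $\Gamma(2)/\Phi(n)$ on $X_{\Phi(n)} = \bar{\mathfrak{H}}/\Phi(n)$.

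Part (1) then follows immediately: the actions of $A$ and $B$ on $X_{\Phi(n)}$ factor through the abelian group $\Gamma(2)/\Phi(n)$, so they commute.

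For part (2), I plan to exhibit an explicit element $\phi \in \Phi(n)$ with $\phi(A^k \cdot 1) = B^k \cdot 1$. A short matrix computation shows that the parabolic element $AB^{-1}$ stabilizes the cusp $1$, so for every integer $m$ the element $\phi_m := B^k (AB^{-1})^m A^{-k}$ sends $A^k \cdot 1$ to $B^k \cdot 1$. To guarantee $\phi_m \in \Phi(n)$, by the first paragraph it suffices that the $A$- and $B$-exponent sums of $\phi_m$, viewed as a word in the free group $\langle A, B \rangle$, both vanish. A direct count gives $A$-exponent $m-k$ and $B$-exponent $k-m$, so both vanish precisely when $m = k$. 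Hence $\phi := B^k (AB^{-1})^k A^{-k}$ lies in $\Gamma(2)' \subseteq \Phi(n)$ and delivers the required equality of cusps. There is no real obstacle in either part; once the identification $\Gamma(2)/\Phi(n) \cong (\Z/n)^2$ is in hand, the argument reduces to exponent-sum bookkeeping together with the one-line verification that $AB^{-1}$ fixes $1$.
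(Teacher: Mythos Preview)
Your proof is correct. Part~(1) matches the paper's argument exactly: both observe that the action of $\Gamma(2)$ on $X_{\Phi(n)}$ factors through the abelian quotient $\Gamma(2)/\Phi(n)$. For part~(2), the paper proceeds by induction on $k$: the base case verifies $BA^{-1}\cdot 1 = 1$ and rewrites $(ABA^{-1}B^{-1})B\cdot 1 = A\cdot 1$ to conclude $\pi(A\cdot 1)=\pi(B\cdot 1)$, and the inductive step uses part~(1) to commute factors past one another. Your argument is a non-inductive variant of the same idea: having observed that $AB^{-1}$ fixes $1$, you directly exhibit $\phi = B^k(AB^{-1})^kA^{-k} \in \Gamma(2)' \subseteq \Phi(n)$ carrying $A^k\cdot 1$ to $B^k\cdot 1$ in one step, using the exponent-sum characterization of $\Gamma(2)'$ in the free group $\langle A,B\rangle$. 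The two approaches rest on identical ingredients (the parabolic stabilizer of $1$ and the abelian quotient structure); yours packages them more compactly and makes the explicit element of $\Gamma(2)'$ visible, while the paper's induction effectively builds the same witness recursively.
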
 
    \begin{proof} 
    
       The part~(\ref{commute}) follows from the fact that $\Phi(n)$ contains the commutator subgroup of $\Gamma(2)$ and that the action of $\Gamma(2)$ on $X_{\Phi(n)}$ factors through $\Gamma(2)/\Phi(n)$. We first note that if $\gamma_1, \gamma_2$ are in $\Gamma(2)$, then we have               
             \[
              \pi(\gamma_1\gamma_2.z)=\pi(\gamma_2\gamma_1.z)
              \]
       by part (\ref{commute}).
       
  We will prove the second part by induction. Assume $k=1$. We easily compute that $BA^{-1}.1=1$ and $(ABA^{-1}B^{-1})B.1=A(BA^{-1}.1)=A.1$. This shows that
        \[  
                \pi(A.1)=\pi(B.1)
         \]  
 since $ABA^{-1}B^{-1}$ is in $\Phi(n)$.  Assume 
            \[
            \pi(A^{k-1}.1)=\pi(B^{k-1}.1)
            \] 
     for some $k$. Remember that $\pi(z)=\pi(z')$ if and only if there exist a $\gamma \in \Phi(n)$ such that 
     $z'=\gamma z$. Then 
            \begin{align*} 
                      \pi(A^k.1)=\pi(A.(A^{k-1}.1))&=\pi(A(\gamma B^{k-1}).1)= \pi((\gamma .B^{k-1})A.1)=
                                 \pi( \gamma' B^k.1),
             \end{align*}   for some $\gamma, \gamma'$ in $\Phi(n)$. 
     \end{proof}

Since the automorphism group of the morphism $X_{\Phi(n)} \to X(2)$ is $\Gamma(2)/\Phi(n)$, the orbits of the points $A^iB^j.1$ for $i,j=1,\ldots,n$ will give us all the points on $X_{\Phi(n)}$ that are lying above the orbit of $\tau=1$ 
on $X(2)$. By Lemma \ref{lem:actionon1}, the set of points in $X_{\Phi(n)}$ lying above $\tau=1$ is given by the images of $A^i.1$.  Hence the set $\{c_k\}$ corresponds to the set $\{A^j.1\}$ in $X_{\Phi(n)}$ since $A^iB^j.1=A^i.gA^j.1=g'A^{i+j}$ for some $g,g' \in \Phi(n)$.
     
  
   \subsection{Automorphisms of {$X_{\Phi(n)}$}}\label{Automorphisms}
   
   It is proven in \cite{Tzermias} that for $n\geq 4$, the automorphism group of the Fermat curve $x^n+y^n=z^n$ is 
   generated by two automorphisms of order $n$ which are given by 
               \[ 
             \epsilon_0:  (x:y:z)\mapsto (\zeta_n x: y:z),
               \] 
               \[ 
             \epsilon_1:  (x:y:z) \mapsto (x: \zeta_n y:z)
               \]
    and the permutation group $S_3$ on three letters. On the other hand, the normalizer of $\Phi(n)$ in 
    $\SL_2(\Z)$ acts on $X_{\Phi(n)}$. 
    
    We first note that $B^{-1}A$ is in the normalizer of $\Phi(n)$ and the sets $\{A^i.0\}$ and $\{B^j.i\infty\}$ are stabilized under the action of $B^{-1}A$. Moreover, the orbit of 
    $A^j.1$ in $X_{\Phi(n)}$ is fixed for each $j$ since $\pi((B^{-1}A).(A^j.1))=\pi(A^{j+1}.(B^{-1}.1))=\pi(A^j.1)$.
    
    The automorphisms of order 
    $n$ of $F_n$ fixing $c_k$ for each $k$ and stabilizing the sets $\{a_i\}, \{b_j\}$ is generated by
         \[ 
         \epsilon_0\epsilon_1 :(x:y:z) \mapsto (\zeta_n x: \zeta_n y: z).
         \] 
    Hence the automorphism $ \epsilon_0\epsilon_1$ corresponds to the action of some power of $B^{-1}A$ on the modular curve $X_{\Phi(n)}$. We will fix a primitive $n$th root of unity $\zeta_n$ such that the action of $A$ is given by $\epsilon_0$ and the action of $B^{-1}$ is given by $\epsilon_1$.

   \subsection{Modular Symbols} \label{sec:Manin}
   
  This section is a summary of Manin's results from \cite{Manin} and \cite{Stein}.
   
  Let $G$ be a finite index subgroup of $\SL_2(\Z)$ and let $\alpha,\beta$ be in $\bar{\mathfrak{H}}$. If $\pi(\alpha)=\pi(\beta) \in X_G(\CC)$, then any path from $\alpha$ to $\beta$ on $\bar{\mathfrak{H}}$ becomes a closed path on $X_G(\CC)$ whose homology only depends on the end points $\alpha,\beta$. We denote this homology class by the symbol $\{\alpha,\beta\} \in H_1(X_G(\CC),\Z)$. More generally for any $\alpha,\beta \in \bar{\mathfrak{H}}$, we define the modular symbol $\{\alpha,\beta\}$ as an element of $\Hom_{\CC}(H^0(X_G(\CC),\Omega^1),\CC)$:
  \[
  \{\alpha,\beta\}: ( \omega \mapsto  \int_{\alpha}^{\beta}{\pi^*\omega}). 
  \]
 Using the $\R$-isomorphism 
  \[ 
 H_1(X_G(\CC),\R) \to \Hom_{\CC}(H^0(X_G(\CC),\Omega^1),\CC).
 \]
 given by $\gamma \mapsto ( \omega \mapsto  \int_{\gamma}{\pi^*\omega})$, we view  $\{\alpha,\beta\}$ as a real homology class in $H_1(X_G(\CC),\R)$.
 
  Define a left action of $\SL_2(\Z)$ on the set of modular symbols by letting $g \in SL_2(\Z)$ 
       act by 
               \[ 
                g\{\alpha,\beta\}=\{g\alpha,g\beta\}
                 \]
       where the action of $g=\begin{bmatrix} a&b\\c&d \end{bmatrix}$ on $\alpha$ and $\beta$ is as it is 
       defined in Section~\ref{Fermat}.
Modular symbols have the following properties:
 \begin{enumerate}
 \item $\{\alpha,\beta\} + \{\beta, \gamma\} +\{\gamma,\alpha\}=0$.
 \item $g\{\alpha,\beta\}=\{g\alpha,g\beta\}=\{\alpha,\beta\}$ for all $g\in G$.
 \end{enumerate}
          
     Suppose $a \in \SL_2(\Z)$. Then we call the modular symbol $a \{0,i\infty\}$ the Manin symbol associated to $a$. Notice here 
       that if the right cosets $Ga$ and $Ga'$ are equal, then $a\{0,i\infty\}=a'\{0,i\infty\}$ in $X_G(\CC)$. Hence, there is a well-defined 
       Manin symbol associated to each right coset of $G$ in $\SL_2(\Z)$. We will use the notation $[a]$ 
       for the Manin symbol of the coset $Ga$. (For more details, one can read \cite{Manin} or \cite{Stein}.)
    We can describe the integral homology of $X_G$ using Manin symbols.
    
      \begin{theorem}[\cite{Manin}] \label{Manin2} Any class $h$ in $ H_1(X_{G},\Z)$ can be 
        represented as a sum $\sum_k{n_k[g_k]}$ of Manin symbols $[g_k]=\{\alpha_k,\beta_k\}$ and this representation can be chosen so that 
        \[ 
        \sum_k{n_k(\pi(\beta_k) -\pi(\alpha_k))} = 0
        \] 
                  as a zero dimensional cycle on $X_{G}(\CC)$ where $\pi: \bar{\UH} \to X_G(\CC)$ is the natural projection map. 
         \end{theorem}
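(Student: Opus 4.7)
The plan is to realize $H_1(X_G,\Z)$ as a subgroup of the relative homology $H_1(X_G,\Sigma;\Z)$, where $\Sigma$ denotes the finite set of cusps of $X_G(\CC)$, and then to show that the latter is spanned by Manin symbols. Since $\Sigma$ is zero-dimensional, the long exact sequence of the pair collapses to
\[
0 \To H_1(X_G,\Z) \To H_1(X_G,\Sigma;\Z) \xrightarrow{\;\partial\;} H_0(\Sigma,\Z) \To H_0(X_G,\Z),
\]
and the relative cycle $\{\alpha,\beta\}$ has boundary $\pi(\beta)-\pi(\alpha)$. Consequently, a formal combination $\sum_k n_k\{\alpha_k,\beta_k\}$ lies in the image of $H_1(X_G,\Z)$ precisely when $\sum_k n_k(\pi(\beta_k)-\pi(\alpha_k))=0$, which is exactly the boundary condition in the statement. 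So the task reduces to showing that every class of $H_1(X_G,\Sigma;\Z)$ is an integral combination of Manin symbols.

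First I would check that $H_1(X_G,\Sigma;\Z)$ is generated by modular symbols $\{\alpha,\beta\}$ with $\alpha,\beta\in\Pro^1(\Q)$. For this, one equips $X_G$ with a CW-structure whose $0$-skeleton contains $\Sigma$ and whose $1$-cells lift to geodesic arcs in $\bar{\UH}$ between points of $\Pro^1(\Q)$; this is produced by translating the standard ideal triangulation of a fundamental domain for $\SL_2(\Z)$ by coset representatives of $G\backslash\SL_2(\Z)$ and descending to $X_G$. Every relative $1$-cycle can then be subdivided and rewritten as an integral combination of such edges, each of which is by construction a modular symbol.

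Next I would reduce each symbol $\{\alpha,\beta\}$ to a sum of Manin symbols using the continued-fraction algorithm. For $\alpha=a/c$ and $\beta=b/d$ written in lowest terms, the continued-fraction expansion produces a sequence $\alpha=p_0/q_0,p_1/q_1,\ldots,p_r/q_r=\beta$ in $\Pro^1(\Q)$ with $p_{i+1}q_i-p_iq_{i+1}=\pm 1$. Telescoping with the additivity identity $\{\alpha,\gamma\}=\{\alpha,\beta\}+\{\beta,\gamma\}$ gives
\[
\{\alpha,\beta\}=\sum_{i=0}^{r-1}\{p_i/q_i,\; p_{i+1}/q_{i+1}\},
\]
and each summand equals $g_i\{0,i\infty\}$ for $g_i=\begin{bmatrix}p_{i+1}&p_i\\ q_{i+1}&q_i\end{bmatrix}\in\SL_2(\Z)$ (possibly after multiplying a column by $-1$), hence is a Manin symbol $[g_i]$.

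The main obstacle is the first step, namely justifying that $H_1(X_G,\Sigma;\Z)$ is genuinely generated by geodesic edges between cusps; this is a statement about singular homology that requires the existence of a compatible hyperbolic triangulation of $X_G$ with cuspidal $0$-cells. Once that is in place, the continued-fraction reduction is essentially automatic, and combining it with the identification of the boundary map $\partial$ yields both the existence of a Manin-symbol representation for any $h\in H_1(X_G,\Z)$ and the vanishing of $\sum_k n_k(\pi(\beta_k)-\pi(\alpha_k))$ in $H_0(\Sigma,\Z)$.
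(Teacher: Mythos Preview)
The paper does not give its own proof of this theorem; it is quoted as a result of Manin, and only a brief outline of Manin's formulation appears after Theorem~\ref{Manin1}. There the paper describes Manin's setup via the abstract groups $C$, $Z$, and $B$ built from right-coset symbols with the $\sigma$- and $\tau$-relations, together with the boundary map $\partial:C\to\langle\text{cusps}\rangle$, and records that Manin identifies $Z/B\cong H_1(X_G,\Z)$ and $C/B\cong H_1(X_G,\{\text{cusps}\};\Z)$ by ``a fairly involved topological construction.''

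Your argument is the standard modern route and is correct. It differs from Manin's presentation mainly in order of operations: Manin first writes down a purely combinatorial presentation and then, in one stroke, matches it to homology; you instead start inside relative homology, use the long exact sequence of the pair to characterize $H_1(X_G,\Z)$ as the kernel of the boundary map, and only afterwards invoke the continued-fraction trick to rewrite arbitrary modular symbols as sums of Manin symbols. The ``main obstacle'' you single out---that geodesic arcs between cusps generate $H_1(X_G,\Sigma;\Z)$---is precisely the content of Manin's topological construction: the Farey tessellation of $\bar{\UH}$ by $\SL_2(\Z)$-translates of the ideal triangle with vertices $0,1,i\infty$ descends to a finite CW-structure on $X_G$ whose $0$-cells include the cusps and whose $1$-cells are exactly the images of geodesics $g\{0,i\infty\}$. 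So the two arguments rest on the same geometry; yours has the advantage of making the role of the boundary condition $\sum_k n_k(\pi(\beta_k)-\pi(\alpha_k))=0$ transparent via the exact sequence, while Manin's packaging yields the full presentation (including the $\tau$-relations of Theorem~\ref{Manin1}) simultaneously.
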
 
  Manin in fact shows that every modular symbol can be written as a $\Z$-linear combination 
       of Manin symbols and moreover he determines the relations between these generators. Let
                     \[ 
                     \sigma=\begin{bmatrix} 0     &1\\
                                             -1      & 0\end{bmatrix}, \hspace{2mm}
                            \tau=\begin{bmatrix} 0  &-1 \\ 
                                                  1 & -1  \end{bmatrix},  \hspace{2mm} 
                         J=\begin{bmatrix}     -1   &0     \\ 
                                                 0 & -1\end{bmatrix}.
                     \]
          \begin{theorem}[\cite{Manin}] \label{Manin1}
          
           If $[a]$ is a Manin symbol, then
                      \begin{align} 
                      [a]+[a\sigma] &= 0 \label{sigma}\\
                            [a]+[a\tau] +[a\tau^2] &=0 \label{tau}\\
                            [a]-[aJ] &= 0.
                      \end{align}
          \end{theorem}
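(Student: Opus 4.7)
The plan is to verify the three identities by direct computation, using only the definition $[a]=a\{0,i\infty\}$, the $\SL_2(\Z)$-equivariance $g\{\alpha,\beta\}=\{g\alpha,g\beta\}$, and the triangle relation $\{\alpha,\beta\}+\{\beta,\gamma\}+\{\gamma,\alpha\}=0$ stated just before the theorem. Note that this last property yields antisymmetry $\{\alpha,\beta\}=-\{\beta,\alpha\}$ by setting $\gamma=\alpha$, which is all I will need for relation \eqref{sigma}.

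For \eqref{sigma}, since $\sigma\cdot z=-1/z$ acts on the cusps by $0\mapsto i\infty$ and $i\infty\mapsto 0$, I compute
\[
[a\sigma]=a\sigma\{0,i\infty\}=a\{i\infty,0\}=-a\{0,i\infty\}=-[a].
\]
For \eqref{tau}, a direct matrix calculation gives $\tau\cdot 0=1$, $\tau\cdot i\infty=0$, $\tau^2\cdot 0=i\infty$, and $\tau^2\cdot i\infty=1$, so
\[
[a]+[a\tau]+[a\tau^2]=a\bigl(\{0,i\infty\}+\{1,0\}+\{i\infty,1\}\bigr),
\]
and the parenthesized sum vanishes by the triangle identity applied to the triple $(\alpha,\beta,\gamma)=(0,i\infty,1)$. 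Finally, $J=-I$ acts as the identity on $\bar{\mathfrak{H}}$ via fractional linear transformations because the action of $\SL_2(\Z)$ on $\bar{\mathfrak{H}}$ factors through $\PSL_2(\Z)$; therefore $J\{0,i\infty\}=\{0,i\infty\}$ and $[aJ]=[a]$.

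I do not expect a serious obstacle: the entire argument is a mechanical unfolding of the definitions, with the only nontrivial input being the triangle identity, used once for the $\tau$-relation. The one point worth flagging is that Manin's theorem in its full strength also asserts that these three relations \emph{generate} all relations among Manin symbols (modulo the boundary map), but the statement as given here asks only for the validity of the listed identities, and the computation above is sufficient.
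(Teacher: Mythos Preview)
Your argument is correct: each of the three identities follows immediately from the definition $[a]=a\{0,i\infty\}$ together with antisymmetry and the triangle relation, exactly as you show. Note, however, that the paper does not give its own proof of this theorem at all---it is quoted as a result of Manin \cite{Manin} and used as a black box---so there is no ``paper's proof'' to compare against; your direct verification simply supplies what the paper omits. Your closing remark is also apt: the deeper content of Manin's theorem is the completeness of these relations, which the paper summarizes in the paragraph following the statement, but the theorem as stated here only asserts that the relations hold, and your computation suffices for that.
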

    (Since the homology group $H_1(X_G,\R)$ is torsion free, it follows that if $[a]=[a\sigma]$ or $[a]=[a\tau]$, then $[a]=0$.)     
 
 Manin states that in some sense, this system of relations is complete and formulates this precisely by the following argument (\cite[1.8-1.9]{Manin}):
  
  Let $C$ be the group generated by the symbols $(a)$ where $a$ is a right coset of $G$ in $\PSL_2(\Z)$ with the relations $(a)+(a\sigma)=0$ for all $a$ and $(a)=0$ if $(a)=(a\sigma)$. Define a boundary map $\partial :C \to \langle \text{cusps}\rangle$ by $(a) \mapsto \pi(a.i\infty)-\pi(a.0)$ where  $\langle\text{cusps}\rangle$ is the free abelian group generated by the cusps of $X_G$. We designate the kernel of $\partial$ by $Z$. Let $B$ be the subgroup of $C$ generated by the elements $(a)$ such that $(a)=(a\tau)$ and the elements $(a)+(a\tau)+(a\tau^2)$ for the remaining $a$. Manin proves that the quotient group $Z/B$ is isomorphic to $H_1(X_G,\Z)$ and $C/B$ is isomorphic to the relative homology group $H_1(X_G,\{\text{cusps}\},\Z)$ using a fairly involved topological construction.

We will use Stein \cite{Stein}'s notation for the group of modular symbols and denote it as $M_2(G)$.

\section{Modular Symbols for Fermat Curves}\label{modular fermat curves}

  In this section, we compute the group of modular symbols $M_2(\Phi(n))$ for $\Phi(n)$ which allows us to prove Theorem~\ref{thm:modular symbols}.
    \begin{prop} \label{prop:modular symbols}
               The group of modular symbols for the Fermat group $\Phi(n)$ is free of rank $n^2+1$ and 
                it is generated by 
                  \[ 
                  \{ [A^iB^j\tau] : 1\leq i \leq n-1, 0\leq j \leq n-1 \} 
                  \]
                  \[ 
                   \{[A^{n-1}B^j] : 0\leq j \leq n-1    \} 
                   \]
                      and
                  \[ 
                  [B^{n-1}\tau]. 
                  \]
    \end{prop}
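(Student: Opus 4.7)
The plan is to enumerate the right cosets of $\Phi(n)$ in $\PSL_2(\Z)$ and systematically apply the relations of Theorem~\ref{Manin1}. Since $\Gamma(2)$ is normal of index $6$ in $\PSL_2(\Z)$ with quotient $\PSL_2(\Z)/\Gamma(2) \cong S_3$, a transversal is given by $\{1,\tau,\tau^2,\sigma,\sigma\tau,\sigma\tau^2\}$. Combined with the isomorphism $\Gamma(2)/\Phi(n) \cong (\Z/n)^2$ generated by the classes of $A$ and $B$, every right coset of $\Phi(n)$ in $\PSL_2(\Z)$ has a unique representative of the form $A^iB^jc$ with $0\leq i,j\leq n-1$, yielding $6n^2$ Manin symbols in total. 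Relation \eqref{sigma} pairs each $\sigma$-type coset with a non-$\sigma$-type coset, and \eqref{tau} eliminates the $\tau^2$-type symbols, leaving $2n^2$ generators $u_{i,j} := [A^iB^j]$ and $v_{i,j} := [A^iB^j\tau]$ indexed by $(i,j) \in (\Z/n)^2$.

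The key and hardest step is to extract the remaining relations from the $\sigma\tau$- and $\sigma\tau^2$-type cosets. A direct matrix computation in $\PSL_2(\Z)$ gives the identities $\sigma\tau\sigma = A^{-1}\tau^2$ and $\sigma\tau^2\sigma = B^{-1}\tau$; combining these with \eqref{sigma} and \eqref{tau} produces the reductions $[A^iB^j\sigma\tau] = u_{i-1,j}+v_{i-1,j}$ and $[A^iB^j\sigma\tau^2] = -v_{i,j-1}$. Applying the $\tau$-relation to $[A^iB^j\sigma] = -u_{i,j}$ then yields the crucial identity
\[
u_{i,j} - u_{i-1,j} + v_{i,j-1} - v_{i-1,j} = 0 \qquad (\star)
\]
for every $(i,j) \in (\Z/n)^2$. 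This bookkeeping with the $\sigma\tau$- and $\sigma\tau^2$-type cosets is the main obstacle; everything downstream is a telescoping calculation.

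To finish, I iterate $(\star)$ for $i = 1,\ldots,n-1$ (with $j$ fixed) to solve recursively for $u_{n-2,j}, u_{n-3,j}, \ldots, u_{0,j}$ in terms of $u_{n-1,j}$ and the $v$-symbols. Substituting these expressions back into $(\star)$ at $i=0$ telescopes the $u$-terms away and produces the secondary relations $\sum_{k=0}^{n-1} v_{k,j} = \sum_{k=0}^{n-1} v_{k,j-1}$; that is, $\sum_k v_{k,j}$ is independent of $j \in \Z/n$. This contributes exactly $n-1$ independent relations (their $j$-sum vanishes identically), which I use to eliminate $v_{0,0}, v_{0,1}, \ldots, v_{0,n-2}$ in favor of $v_{0,n-1} = [B^{n-1}\tau]$, leaving precisely the generating set stated. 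Finally, since $M_2(\Phi(n)) \cong H_1(F_n,\{xyz=0\};\Z)$ is a free abelian group of rank $2g(F_n) + 3n - 1 = (n-1)(n-2) + 3n - 1 = n^2 + 1$ by the long exact sequence of the pair $(F_n,\{xyz=0\})$, the count forces these $n^2+1$ symbols to be a free $\Z$-basis, proving both the rank claim and freeness.
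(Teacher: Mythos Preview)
Your argument is correct and follows essentially the same route as the paper. Both proofs enumerate the $6n^2$ cosets, use the $\sigma$- and $\tau$-relations to reduce to the $2n^2$ symbols $[A^iB^j]$ and $[A^iB^j\tau]$, and derive the single family of relations $(\star)$ (this is exactly the paper's Equation~\eqref{relation}, reindexed). The paper then performs an explicit Gaussian elimination on the $n^2\times 2n^2$ relation matrix with unit pivots to exhibit the $n^2+1$ free variables; your telescoping substitution is the same elimination written out recursively. The only substantive difference is the final freeness step: the paper's row reduction over~$\Z$ with $\pm1$ pivots already shows the quotient is torsion-free, whereas you instead invoke the long exact sequence of the pair $(F_n,\{xyz=0\})$ to compute the rank $n^2+1$ and conclude that your $n^2+1$ generators must be a basis. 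Both are valid; the paper's argument is self-contained linear algebra, while yours imports a small topological fact.
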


 In order to prove Proposition~\ref{prop:modular symbols}, we first find the Manin symbol associated to each right coset of $\Phi(n)$ in $\PSL_2(\Z)$ and compute the relations given in Theorem~\ref{Manin1}.
    
 \subsection{Cosets of $\Phi(n)$} 
  We will begin with finding coset representatives of 
  $\Phi(n)$ in $\PSL_2(\Z)$. It is easy to see that the cosets of $\Phi(n)$ in $\Gamma(2)$ are 
  given by $A^iB^j$ where $0 \leq i,j\leq n-1.$ Notice that the action of $M$ and $-M$ coincides for $M\in \SL_2(\Z)$ and hence we consider the cosets in $\PSL_2(\Z)$ here instead of $\SL_2(\Z)$.
       
        \begin{lemma} \label{coset}
           A complete set of right coset representatives of $\Gamma(2)$ in $\PSL_2(\Z)$ consists of the 
           following matrices:
                \[
                \alpha_0= \begin{bmatrix} 1&0\\0&1 \end{bmatrix},
               \alpha_1= \begin{bmatrix}    1&1\\-1&0 \end{bmatrix},
                \alpha_2=  \begin{bmatrix} 0&-1\\1&-1 \end{bmatrix},
                \alpha_3=  \begin{bmatrix} 0&1\\-1&0 \end{bmatrix},
                \alpha_4=  \begin{bmatrix} 1&1\\0&1 \end{bmatrix},
                 \alpha_5= \begin{bmatrix} 1&0\\1&1 \end{bmatrix}.
                 \]
        \end{lemma}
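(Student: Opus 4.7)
The plan is to use the standard reduction-mod-$2$ description of $\Gamma(2)$ to first identify the index $[\PSL_2(\Z):\Gamma(2)]$, and then to verify that the six listed matrices land in six distinct cosets by reducing them modulo $2$.

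First, I would observe that $\Gamma(2)$ is by definition the kernel of the reduction homomorphism $\SL_2(\Z) \to \SL_2(\Z/2\Z)$. This map is surjective (every element of $\SL_2(\F_2)$ lifts to $\SL_2(\Z)$), and $-I$ reduces to the identity mod $2$, so $-I \in \Gamma(2)$. Therefore the induced map $\PSL_2(\Z) \to \SL_2(\F_2)$ is still surjective with kernel (the image of) $\Gamma(2)$, giving
\[
[\PSL_2(\Z):\Gamma(2)] = |\SL_2(\F_2)| = 6.
\]

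Next, I would compute the reductions $\bar{\alpha}_i \in \SL_2(\F_2)$ of the six listed matrices. A direct calculation yields
\[
\bar{\alpha}_0 = \begin{bmatrix}1&0\\0&1\end{bmatrix},\quad
\bar{\alpha}_1 = \begin{bmatrix}1&1\\1&0\end{bmatrix},\quad
\bar{\alpha}_2 = \begin{bmatrix}0&1\\1&1\end{bmatrix},\quad
\bar{\alpha}_3 = \begin{bmatrix}0&1\\1&0\end{bmatrix},\quad
\bar{\alpha}_4 = \begin{bmatrix}1&1\\0&1\end{bmatrix},\quad
\bar{\alpha}_5 = \begin{bmatrix}1&0\\1&1\end{bmatrix},
\]
and these are visibly six distinct matrices. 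Since $|\SL_2(\F_2)|=6$ and the six $\bar{\alpha}_i$ are distinct, they exhaust $\SL_2(\F_2)$; equivalently, the six $\alpha_i$ lie in six distinct right cosets of $\Gamma(2)$ in $\PSL_2(\Z)$. Because this number of cosets equals the index, the $\alpha_i$ form a complete set of right coset representatives.

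There is no real obstacle here beyond being careful with conventions: $\PSL_2(\Z)$ is used throughout because $-I \in \Gamma(2)$ ensures no collapse under the quotient by $\{\pm I\}$, and one should check the sign convention on $\alpha_3 = \bigl[\begin{smallmatrix}0&1\\-1&0\end{smallmatrix}\bigr]$ (its reduction mod $2$ is the same as $\bigl[\begin{smallmatrix}0&1\\1&0\end{smallmatrix}\bigr]$). The whole argument is a short index-plus-distinctness verification.
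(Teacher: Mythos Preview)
Your proof is correct and follows essentially the same approach as the paper: establish that the index is $6$ and then verify that the six matrices lie in distinct cosets. The paper phrases the second step as checking $\alpha_i\alpha_j^{-1}\notin\Gamma(2)$ by direct computation, while you carry this out explicitly via reduction mod $2$; since $\Gamma(2)$ is the kernel of that reduction, these are the same verification.
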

           \begin{proof} 
           One can show by direct computation that $\alpha_i\alpha_j^{-1} \not\in \Gamma(2)$ for any 
           $i\neq j$. Since the index of $\Gamma(2)$ in $\PSL_2(\Z)$ is $6$, we are done.
           \end{proof}

     By Lemma \ref{coset}, the set of right cosets of $\Phi(n)$ in $\PSL_2(\Z)$ is given by
         \[ 
         \{ A^iB^j\alpha_k: \hspace{2mm} 0\leq i,j \leq (n-1) \hspace{1mm} \text{and} \hspace{1mm} 0\leq 
         k\leq 5\}.
         \]     
 Hence the set $\{[A^iB^j\alpha_k]\}$ generates the group of modular symbols. To be able to find free generators for this group, we will compute the relations given in Theorem \ref{Manin1}. 

We note again that $\alpha_3=\sigma=\begin{bmatrix} 0&1\\-1&0 \end{bmatrix}$ and $\alpha_2=\tau= \begin{bmatrix} 0&-1\\1&-1 \end{bmatrix}$.

      The matrices $A^n$ and $B^n$ act trivially on the modular symbols since they are in $\Phi(n)$. Hence in the following computations and for the rest of the paper, the powers of $A$ and $B$ are considered modulo $n$. Also remember that $[a]$ denotes the Manin symbol of $a \in \PSL_2(\Z)$.
   
   \subsection{$\sigma$-relations}\label{sigma relations}
      We first compute that
           \[ 
            \alpha_1 \sigma =A^{-1}J\alpha_4, \quad
                        \alpha_2 \sigma =\alpha_5,\quad
                        \alpha_3 \sigma =\alpha_0 J=J,
           \] 
           \[
                         \alpha_4 \sigma =A\alpha_1,\quad
                         \alpha_5 \sigma=J \alpha_2,\quad
                         \alpha_0 \sigma =\sigma.
            \]
      Thus we have
         
          \[ 
            [A^iB^j \alpha_1 \sigma] =[A^{i-1}B^j\alpha_4],  \quad
                        [A^iB^j\alpha_2 \sigma] =[A^iB^j\alpha_5], \quad
                       [A^iB^j\alpha_3 \sigma] =[A^iB^j],
           \] 
           \[
                      [A^iB^j \alpha_4 \sigma] =[A^{i+1}B^j\alpha_1], \quad
                       [A^iB^j\alpha_5 \sigma]=[A^iB^j \alpha_2], \quad
                       [A^iB^j \sigma] =[A^iB^j\alpha_3].
            \]
     Notice that the right cosets of $\Phi(n)$ in $\Gamma(2)$ commute since $\Phi(n)$ contains the 
     commutator subgroup of $\Gamma(2)$. Now using Equation (\ref{sigma}), we have the following 
     equalities:
            
              \begin{align} 
              \label{4.4} [A^iB^j \alpha_1] + [A^{i-1}B^j \alpha_4] &=0 \\
                     [A^iB^j\alpha_2] + [A^{i}B^j\alpha_5] &=0 \\
                     [A^iB^j\alpha_3] + [A^{i}B^j]         &=0 . 
              \end{align}
      Therefore
      the group of modular symbols for $\Phi(n)$ is generated by 
              \[ 
               \{ [A^iB^j\alpha_k]: \hspace{1mm} 0\leq i,j\leq (n-1) \hspace{1mm} \text{and} \hspace{1mm} 
               k=0,1,2\}.
               \] 

   \subsection{{$\tau$}-relations}\label{tau relations}
      Similarly we compute $\alpha_k\tau$ for $k=0,\ldots, 5$ as the following:
       
        \[
         \alpha_1\tau=A^{-1},    \hspace{2mm}  \alpha_2 \tau= A\alpha_1,  \hspace{2mm}  
         \alpha_3 \tau= A^{-1} \alpha_4,
        \]
        \[ 
        \alpha_4 \tau=AB^{-1}\alpha_5J,  \hspace{2mm}   \alpha_5 \tau=B\alpha_3J,     \hspace{2mm}
        \alpha_0 \tau=\tau.
        \]
      Then we find that the Manin symbol of $A^iB^j\alpha_k\tau$ as:
        
        \[
         [A^iB^j \alpha_1\tau]=[A^{i-1}B^j],    \hspace{2mm}  
         [A^iB^j\alpha_2 \tau]= [A^{i+1}B^j\alpha_1],  \hspace{2mm}  
         [A^iB^j\alpha_3 \tau]= [A^{i-1}B^j \alpha_4],
        \]
       
        \[ 
        [A^iB^j\alpha_4 \tau]=[A^{i+1}B^{j-1}\alpha_5],  \hspace{2mm}   
        [A^iB^j\alpha_5 \tau]=[A^iB^{j+1}\alpha_3],     \hspace{2mm}
        [A^iB^j\tau]=[A^iB^j\alpha_2].
        \]
      Hence by Equation \ref{tau} we obtain:
      
              \begin{align} \label{tau1}
              [A^iB^j]+[A^iB^j\alpha_2 ]   +[A^{i+1}B^j\alpha_1]& =0 \\ \label{tau2}                                                              
              [A^iB^j\alpha_3]  + [A^{i-1}B^j\alpha_4] + [A^{i}B^{j-1}\alpha_5] & =0.
              \end{align}
      Now using the results of section \ref{sigma relations}, Equation \ref{tau2} becomes     
            \begin{equation}\label{tau-new} 
                        [A^iB^j]+[A^iB^j\alpha_1] +   [A^{i}B^{j-1}\alpha_2] =0   
            \end{equation}
       Hence by Equation \ref{tau1}, we conclude that the set of Manin symbols 
         \[
         \{ [A^iB^j] \hspace{1mm} \text{and} \hspace{1mm} [A^iB^j \alpha_2] : \hspace{1mm} 0 \leq i,j\leq 
         (n-1). \}
         \] 
         generate $M_2(\Phi(n))$. Therefore using Equation \ref{tau-new} and Equation \ref{tau1}, we 
         obtain the following relation:
             \begin{equation} \label{relation}
                         [A^{i+1}B^{j}] + [A^{i+1}B^{j-1}\tau]=[A^iB^j] +                                                                                 
                             [A^iB^j\tau]  
             \end{equation} 
                 for $0 \leq i \leq (n-1)$ and $0 \leq j \leq n-1$. Remember that $\alpha_2=\tau$.

      \subsection{Proof of Proposition~\ref{prop:modular symbols}}
               We will use $x_{i,j}:=[A^iB^j]$ and $y_{i,j}:=[A^iB^j\tau]$ to ease the notation. We put 
               Lexicographical order on the 
               set $\{x_{i,j},y_{i,j}:  i,j=0,\ldots n-1\}$ as follows: \\
                   For any $i,j,k,l$,
                   \[
                    x_{i,j} \leq y_{k,l}
                   \] 
                   and
                  \[  
                  x_{i,j}  \leq   x_{k,l}       \hspace{2mm}     \text{or}     \hspace{2mm} 
                  y_{i,j}  \leq   y_{k,l}       \hspace{2mm} 
                    \text{if and only if}      \hspace{2mm}      
                     i < k    \hspace{2mm} \text{or} \hspace{2mm} i = k 
                   \hspace{2mm} \text{and} \hspace{2mm}  j \leq l. 
                  \]
                Also note that the index set for $i,j$ is $\Z/n\Z$. Hence we see that if $i=n-1$ and $j=0$, then 
                $x_{i+1,j-1}=x_{0,n-1}$. 
                
                We already showed that $x_{i,j}$ and $y_{i,j}$ generate the group of modular symbols for 
                the Fermat group $\Phi(n)$. Furthermore, we know from Equation \ref{relation} that they 
                satisfy the relation
                 \begin{align} \label{x,y}
                         x_{i,j}+ y_{i,j}=x_{i+1,j} +y_{i+1,j-1}.
                 \end{align}
                Equation \ref{x,y} gives us a system of linear equations. Let $R$ be the $(n^2 \times 2n^2)$ 
                matrix obtained by 
                this system of equations with respect to the ordered basis  
                \[ 
                    \{x_{i,j},y_{i,j}: i,j=0,\ldots n-1 \}.
                \]
               Let $R^{i,j}$ denote the $i j$'th row of $R$, i.e., the row corresponding to the 
               equation: 
                         \[ 
                            x_{i,j}+ y_{i,j} - x_{i+1,j} - y_{i+1,j-1}=0.                       
                          \]
               Notice that for $i\leq n-2$ the first non-zero entry of each $R^{i,j}$ is 1 and it is the 
               coefficient of $x_{i,j}$. Assume $i=n-1$. For $j$ fixed,  we add $\sum_{i\neq (n-1)}{R^{i,j}}$ to 
               $R^{n-1,j}$ and denote the new row we obtain by $R_0^{n-1,j}$. Notice that the first $n^2$ 
               entry of $R_0^{n-1,j}$ is $0$ since
                       \[  
                         \sum_{i=0}^{n-1} {(x_{i,j}-x_{i+1,j})}=0.
                       \]
               The equation corresponding to $R_0^{n-1,j}$ gives us that 
                      \[ 
                          \sum_{i=0}^{n-1}{ (x_{i,j}+ y_{i,j} - x_{i+1,j} - y_{i+1,j-1})}=  \sum_{i=0}^{n-1} 
                          {\left(y_{i,j}-y_{i+1,j-1}\right)}=0
                      \]
               If $j \geq 1$, then the first non-zero term of $R_0^{n-1,j}$ is the coefficient of 
               $y_{0,j-1}$. Similarly if $j=0$, then it is the coefficient of $y_{0,0}$. Now let 
                   \[ 
                   R_1^{n-1,j}=\sum_{k\leq j}{R_0^{n-1,k}}
                   \]
               and replace $R_0^{n-1,j}$ by $R_1^{n-1,j}$. Assume $j\neq n-1$, then the first non-zero 
               entry of $R_1^{n-1,j}$ is $1$ and it is the 
               coefficient of $y_{0,j}$. If $j=n-1$, then $ R_1^{n-1,n-1}$ is the zero row since
                         \[ 
                         \sum_k\left(\sum_i{y_{i,k}-y_{i+1,k-1}}\right)=0.
                         \]
            To summarize, we showed that
                           \[ 
                  \{ y_{i,j} : \hspace{1mm} 1\leq i \leq (n-1), \hspace{1mm}  0\leq j \leq (n-1) \} 
                  \]
                  \[ 
                   \{x_{n-1,j} : \hspace{1mm} 0\leq j \leq (n-1)    \} 
                   \]
               and 
                 \[
                  [y_{0,n-1}] \]
              are the free variables and they generate $\M_2(\Phi(n))$.   
   \qed

        \subsection{Proof of Theorem~\ref{thm:modular symbols}}
    We know that we can identify the action of $A$ and $B$ on $X_{\Phi(n)}$ by the action of $\epsilon_0$ and $\epsilon_1^{-1}$ on the Fermat curve. We also notice that $\tau$ acts as  an order $3$ element on $X_{\Phi(n)}$ and that $\tau$ sends $0$ to $1$, $i\infty$ to $1$ and $i\infty$ to $0$. We compute that 
    \[ 
    \phi (\epsilon_0(x:y:z))=\epsilon_1(\phi(x:y:z)) \hspace{2mm} \text{ and} \hspace{2mm} \phi(\epsilon_1(x:y:z))=\epsilon_0^{-1}(\epsilon_1^{-1}(x:y:z)). 
    \]
  Comparing this to the relations $ \tau A=B^{-1}\tau$ and $\tau B^{-1}=A^{-1}B\tau$ in $\PSL_2(\Z)$, we find that the action of $\tau$ on $X_{\Phi(n)}$ corresponds to the action of $\phi$ on $F_n$. Hence the statement follows from Proposition~\ref{prop:modular symbols} and the relation given in \eqref{relation}.

     \section{Homology of Fermat Curves} \label{sec:homofFermat}

   In this section, we will describe the first integral homology group of Fermat curves in terms of 
   modular symbols. 
     
    \begin{prop}\label{prop:cuspidal}
           The homology group $H^1(X_{\Phi(n)},\Z)$ of $X_{\Phi(n)}$ is generated by
                   \[\gamma_{i,j}:= [A^i\tau]-[A^{i+1}B^{n-1}\tau]+ [A^{i+1}B^{j}\tau] -[A^iB^{j+1}\tau] \] 
           for $1\leq i \leq (n-2)$ and $0\leq j \leq (n-2)$. 
     \end{prop}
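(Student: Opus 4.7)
The plan is to apply Manin's theorem (Theorem~\ref{Manin2}), which identifies $H_1(X_{\Phi(n)},\Z)$ with the kernel of the boundary map $\partial\colon M_2(\Phi(n))\to\langle\text{cusps}\rangle$ sending $\{\alpha,\beta\}\mapsto\pi(\beta)-\pi(\alpha)$. I will evaluate $\partial$ on the basis from Proposition~\ref{prop:modular symbols}, verify that each $\gamma_{i,j}$ is a cycle, and then show $\ker\partial = \mathrm{span}_\Z\{\gamma_{i,j}\}$ by splitting off a complementary $\Z$-submodule on which $\partial$ is injective.

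Label the $3n$ cusps as $P_i=\pi(A^i\cdot 0)$, $Q_j=\pi(B^j\cdot i\infty)$, and $R_k=\pi(A^k\cdot 1)$ for $0\le i,j,k\le n-1$. Since $\Phi(n)$ is normal in $\Gamma(2)$ with abelian quotient, since $A$ fixes $i\infty$ and $B$ fixes $0$, and since $\pi(A^iB^j\cdot 1)=\pi(A^{i+j}\cdot 1)$ by Lemma~\ref{lem:actionon1}, the boundaries of the Manin basis elements are
\[
\partial[A^iB^j\tau] = P_i - R_{i+j},\qquad \partial[A^{n-1}B^j] = Q_j - P_{n-1},\qquad \partial[B^{n-1}\tau] = P_0 - R_{n-1},
\]
with all indices taken modulo $n$. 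A telescoping of four such boundaries gives $\partial\gamma_{i,j}=0$, and the count $(n-1)(n-2)=2g(F_n)$ matches the expected rank of $H_1(F_n,\Z)$.

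The crux is to promote the $\gamma_{i,j}$'s from a $\Q$-basis of $\ker\partial$ to a $\Z$-basis. Define the complementary set (a subset of the Manin basis)
\[
T = \{[AB^j\tau] : 0\le j\le n-1\} \cup \{[A^iB^{n-1}\tau] : 2\le i\le n-1\} \cup \{[A^{n-1}B^j] : 0\le j\le n-1\} \cup \{[B^{n-1}\tau]\},
\]
of cardinality $3n-1$. The Manin basis elements \emph{not} in $T$ are $\{[A^aB^b\tau] : 2\le a\le n-1,\ 0\le b\le n-2\}$, in natural bijection with the $\gamma_{i,j}$'s via $(a,b)=(i+1,j)$. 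Rewriting $\gamma_{i,j}=[A^{i+1}B^j\tau]-[A^{i+1}B^{n-1}\tau]+[A^i\tau]-[A^iB^{j+1}\tau]$, the leading term $[A^{i+1}B^j\tau]$ has coefficient $+1$; the term $[A^{i+1}B^{n-1}\tau]$ lies in $T$; and the remaining two terms either lie in $T$ (when $i=1$, or when $i\ge 2$ and $j=n-2$) or have first coordinate $i<i+1$, and hence appear \emph{strictly later} once the missing basis is ordered by decreasing first coordinate. The change-of-basis matrix from the Manin basis to $\{\gamma_{i,j}\}\cup T$ is therefore block lower-triangular with identity blocks along the diagonal, so $\{\gamma_{i,j}\}\cup T$ is a $\Z$-basis of $M_2(\Phi(n))$.

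To finish, I verify that $\partial|_{\mathrm{span}_\Z(T)}$ is injective by a direct coefficient comparison: if $\partial v=0$, then the $Q_j$-coefficients kill the $[A^{n-1}B^j]$-terms, the $P_i$-coefficients for $2\le i\le n-1$ together with the $P_0$-coefficient kill the $[A^iB^{n-1}\tau]$-terms and the $[B^{n-1}\tau]$-term, and finally the $R_k$-coefficients kill the $[AB^j\tau]$-terms. Any $v\in\ker\partial$ thus decomposes uniquely as $u+w$ with $u\in\mathrm{span}_\Z\{\gamma_{i,j}\}$ and $w\in\mathrm{span}_\Z(T)$, whereupon $\partial w=\partial v=0$ forces $w=0$, so $v=u\in\mathrm{span}_\Z\{\gamma_{i,j}\}$. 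I expect the main obstacle to be the bookkeeping in the triangularity step: one must confirm that the leading term $[A^{i+1}B^j\tau]$ of $\gamma_{i,j}$ is absent from $T$ and from every $\gamma_{i',j'}$ with $i'>i$, which requires a careful case split on whether $i=1$ or $i\ge 2$ and on whether $j=n-2$ or $j<n-2$.
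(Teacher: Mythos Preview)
Your proof is correct and follows essentially the same route as the paper: both verify that each $\gamma_{i,j}$ is a cycle by computing boundaries via Lemma~\ref{lem:actionon1}, and both finish with a coefficient analysis against the Manin-symbol basis of Proposition~\ref{prop:modular symbols}. The paper phrases the endgame as an inductive linear-independence argument in the $y_{i,j}$ followed by the rank count $2g=(n-1)(n-2)$, whereas you make the $\Z$-basis conclusion explicit by exhibiting the complement $T$, checking triangularity of the change of basis, and verifying injectivity of $\partial|_T$; your organization is a bit more careful about the integral (rather than merely rational) spanning, but the substance is the same.
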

     
     \begin{proof}
             We first compute that  
                           \[
                            \tau \{0,i\infty\}=\{1,0\}. 
                            \] 
                            Then using the fact that $B$ stabilizes $0$, we  find that
                           \[
                           A^i  \{1,0\} -A^i B^{j} \{ 1,0 \}= \{ A^i.1, A^iB^{j}.1\} 
                           \] and 
                           \[
                           A^{i+1}B^{j-1} \{1,0\}-A^{i+1}B^{n-1} \{1,0\}= \{A^{i+1}B^{j-1}.1,A^{i+1}B^{n-1}.1\}.
                            \] 
             Using Lemma~\ref{lem:actionon1}, we see that 
                     \[
                                 \pi(A^{i+1}B^{n-1}.1)=\pi(A^i.1). 
                     \]
                  One can similarly show that   
                     \[ 
                     \pi(A^{i+1}B^{j-1}.1)= \pi(A^i B^j.1).
                     \]
                 Therefore we obtain that 
                 \[ 
                 \gamma_{i,j}= \{ A^i.1, A^iB^{j}.1\} +  \{A^{i+1}B^{j-1}.1,A^{i+1}B^{n-1}.1\} 
                 \] and that
                      \[ 
                      \pi(A^i.1)-\pi(A^i B^j.1) + \pi(A^{i+1}B^{j-1}.1) - \pi(A^{i+1}B^{n-1}.1)=0  
                      \] 
                  as a $0$-dimensional cycle on $X_{\Phi(n)}$.    
                  Hence by Theorem \ref{Manin2}, 
                           \[ 
                          \gamma_{i,j}= [A^i\tau]-[A^{i+1}B^{n-1}\tau]+  [A^{i+1}B^{j}\tau] -[A^iB^{j+1}\tau] 
                           \]  
                  is in $H^1(X_{\Phi(n)},\Z)$ for every $i,j$ such that $1\leq i\leq n-2$ and 
                  $0\leq j \leq n-2$. 
                  We will show that the set of $\gamma_{i,j}$ is $\Z$-linearly independent. 
                  
                  To ease the 
                  notation, we will use $y_{i,j}$ for $[A^iB^j\tau]$ as in the proof of Proposition~\ref{prop:modular symbols}.
                  Hence $\gamma_{i,j}$ denotes the cycle 
                  \[ 
                  y_{i,0}-y_{i+1,n-1}+y_{i+1,j}-y_{i,j+1}
                  \]
                   and suppose that 
                         \begin{equation} \label{linearity} 
                                  \sum_{i,j}{n_{i,j}\gamma_{i,j}}=0. 
                          \end{equation}
                  Here in this sum $i$ runs from $1$ to $n-2$ and $j$ runs from $0$ to $n-2$.
                  
                  We will prove by induction that $n_{i,j}=0$ for all $i,j$.
                  We know by Proposition~\ref{prop:modular symbols} that the set of $y_{i,j}$ (for $i=1,\ldots,n-1$ and $j=0,\ldots, n-1$) is $\Z$-linearly 
                  independent. If $j$ is fixed, then the coefficient of $y_{1,j+1}$ in the sum given in 
                  \eqref{linearity} is 
                  $-n_{1,j}$. Hence 
                    \[ 
                      n_{1,j}=0 \hspace{2mm} \text{for all} \hspace{2mm}  0\leq j \leq n-3.
                      \]
                  Similarly the coefficient of $y_{1,0}$ 
                  is $\sum_{j}{n_{1,j}}$ and thus 
                     \[ 
                     n_{1,n-2}=0.
                     \]
                  This proves the base step of the induction. Assume $n_{i-1,j}=0$ for all $j=0,\ldots,n-2$ and $i \geq 2$. If 
                  $j \geq 1$, then the coefficient of $y_{i,j}$ is 
                  $-n_{i,j-1}+n_{i-1,j}$ and using the induction hypothesis, we obtain
                    \[ 
                    n_{i,j-1}=0  \hspace{2mm} \text{for all} \hspace{2mm}  j=1,\ldots,n-2.
                    \]
                   For $j=0$, we look at the coefficient of $y_{i,0}$ which equals to 
                  $ n_{i-1,0} + \sum_{j}n_{i,j}$. Since we showed $n_{i,j}=0$ for every $j \leq n-3$ we see 
                  that $n_{i,n-2}$ must also equal to $0$. Hence the set
                  \[ 
                     \{ \gamma_{i,j}: \hspace{1mm} 1\leq i \leq n-2, \hspace{1mm} 0\leq j \leq n-2 \}
                  \] 
                  is linearly independent and it is a basis for the homology group since the genus of 
                 the Fermat curve is $(n-1)(n-2)/2$ which implies that the rank of the first homology group 
                 is $(n-1)(n-2)$.
          
       \end{proof}    
       
       \subsection{Proof of Theorem~\ref{thm:cuspidal}}
      
       Notice that the set $\{ \gamma_{i,j+1}-\gamma_{i,j} \mid 1\leq i\leq n-2, \hspace{2mm} 0\leq j \leq n-2\}$ also forms a basis for $H_1(F_n,\Z)$. Moreover using \eqref{relation} we find that 
       \[ \gamma_{i,j+1}-\gamma_{i,j}=[A^{i+1}B^{j+2}]-[A^iB^{j+2}]+[A^iB^{j+1}]-[A^{i+1}B^{j+1}].
       \]
(Notice that $\gamma_{i,n-1}=0$ for every $1\leq i \leq n-2$.) 

We translate this basis into $\{ \epsilon_0^i\epsilon_1^j (1-\epsilon_0)(1-\epsilon_1)\beta \}$ using the fact that the action of $A$ (resp. $B^{-1}$) on $F_n$ is given by $\epsilon_0$ (resp. $\epsilon_1$).

              
\section{Fermat Surfaces} \label{Fermat surfaces}
    \subsection{Monodromy} 
    
    Let $X$ and $Y$ be algebraic varieties over $\mathbb{C}$. Suppose that $f: X\rightarrow Y$ is a 
    fibration. Let $y_0$ be in $Y$ and let $X_0$ be the fiber of $f$ at $y_0$. For $\gamma$ in 
    $\pi_1(Y,y_0)$, define $H :X_0\times [0,1]\rightarrow Y $ as 
        \[   
            H(x,t):=\gamma(t)
        \]
    for all $x\in X_0$. Then by the homotopy lifting property of $f$,  there exists a lift 
        \[ 
         \tilde{H}: X_0 \times [0,1] \rightarrow X
        \]
          as given in the following diagram (See \cite{Davis-Kirk} for more 
    details):
         
          \begin{equation*}
             \begin{tikzcd}
                X_0 \times \{0 \} \arrow{r}  \arrow{d}             &  X  \arrow{d}{f} \\
                X_0 \times [0,1] \arrow{ur}{\tilde{H}}  \arrow{r} {H} &Y \\
             \end{tikzcd}.
          \end{equation*}
    The commutativity of the above diagram induces a morphism  $X_0 \rightarrow X_0$ (at $t=1$) 
    which induces a homomorphism on the homology groups. Hence we obtain an action of the 
    fundamental group $\pi_1(Y,y_0)$ of $Y$ on $H_1(X_0,\Z)$:
                           \[ \rho :\pi_1(Y,y_0) \longrightarrow \text{Aut}(H_1(X_0,\Z))\]
    which we call the monodromy action. It is worth noting here that the induced map 
    $X_0\rightarrow X_0$ is a self homotopy equivalence and it is unique up to a homotopy.
   
 \subsection{Description of the Fibration}
    
    Let $X$ denote the Fermat surface given by the equation 
              \[ 
              x^n+y^n+z^n=w^n .
              \]
     in $\Pro_{\mathbb{C}}^3$. Consider the rational map $f: X\rightarrow \Pro^1$ given by $
              f: (x:y:z:w) \mapsto (z:w).$
    We first blow up the variety $X$ successively at the points of the set 
            \[ 
                 B=\{(x:y:0:0) \in \Pro^3: \hspace{1mm} x^n+y^n=0 \}.
            \]
     We obtain a variety $\bar{X}$, a proper morphism  $\bar{f}: \bar{X} \rightarrow \Pro^1$ and a 
     birational map $\varphi: \bar{X} \rightarrow X$. A generic fiber of $f$ is given by the equation 
     $x^n+y^n=1$.    
     The fibers of $\bar{f}$ are projective curves since $\bar{f}$ is proper. Then a generic fiber of 
     $\bar{f}$ is isomorphic to the curve
              \[
               x^n+y^n=z^n. 
               \] 
    The singular fibers are given by $x^n+y^n=0$ when $z^n=w^n$. 
    The morphism $\bar{f}$ is flat since $k[x,y,z]/(x^n+y^n+z^n=1)$ is torsion free as a $k[z]$ module and $k[z]$ is a principal ideal domain. (The fact that $k[x,y,z]/(x^n+y^n+z^n-1)$ is torsion free follows since it is an integral domain.) 
   
    Let $U$ denote the variety 
    $\bar{f}^{-1}(\Pro^1-S)$ with 
              \[ 
              S=\{[\zeta_n^k:1] \in \Pro^1: k=0,\ldots,n-1\}.
              \]  
   Then we obtain a map $g: U \rightarrow V$ where $V=\Pro^1-S$. Hence we conclude that the map $g: U \to V$ induced by $\bar{f}$ is a surjective, proper, and flat morphism with smooth fibers which implies that $g$ is in fact smooth hence it is a submersion on the underlying manifolds of complex points.  By Ehresmann's theorem \cite{Ehresmann}, a proper submersion is a fibration (in fact a fiber bundle); hence $g:U \to V$ is a fibration. We will compute the monodromy of $g$.
     
     Let $y_0$ be the point $[0:1] \in \Pro^1$. To describe the monodromy action, we first formulate the generators of the fundamental group $\pi_1(V, y_0)$ in the following way: 
   
    We may take the leaves (around $n$th the roots of unity) of the polar rose for each $n$.
    When $n=3$, one can visualize these generators as in the figure below.
           \begin{figure}[ht]
              \centering
               \begin{tikzpicture}[scale=0.40] 
                 \begin{polaraxis}
                    \addplot[mark=none,domain=0:360,samples=300] {2*cos(3*x)};
                      \draw[black, thick]   (130,2) node {\LARGE $\sigma_1$};
                      \draw[black,thick]  (230,1.8) node{\LARGE $\sigma_2$};
                      \draw[black, thick] (20,1.8) node{\LARGE $\sigma_0$};
                 \end{polaraxis}
               \end{tikzpicture}
               \hspace{3mm}
               \caption{Generators of $\pi_1(V,y_0)$} \label{rose} in $w=1$ plane for $n=3$.
              \end{figure}
   
    Let $\sigma_k: [0,1] \to \CC$ be the loop around the singular point $[\zeta_n^k:1]$ for $k=0,\ldots,n-1$ such that $\sigma_k(0)=\sigma_k(1)=0$. 
    We would like to find a lift $\tilde{\sigma}_k$ for $\sigma_k$ such that $g(\tilde{\sigma}_k)=\sigma_k$. Since $\sigma_k(t)$ lies in the plane $w=1$, $\tilde{\sigma}_k(t)$ also should be in $w=1$ inside $U$. Hence we may assume that $w\neq 0$. Since $\bar{X}$ and $X$ are isomorphic on $w\neq 0$, we may assume that $U$ is given by the equation $x^n+y^n +z^n=1$ in the affine space $\A^3$. 
   Hence the fibration can be described by $U \to \A^1-\{{\zeta_n}^k \mid k=0,\ldots,n-1 \}$ where $(x,y,z) \mapsto z$ and the fiber of $z=0$ is given by the affine equation $x^n+y^n=1$. Remember that we pick the base point $y_0$ as $z=0$. 
      
  We notice first that if we rotate $\sigma_k $ by $\frac{2\pi k}{n}$ degree clockwise, we obtain $\sigma_0$, that is, $e^{\frac{-2\pi ki}{n}}(\sigma_k(t))=\sigma_0(t)$ for every $t \in [0,1]$ and $k=0,\ldots,n-1$. Hence $({\sigma_k}(t))^n=(\sigma_0(t))^n$ in $\CC$ for every $k=0,\ldots,n-1$. 
  
  Let $(x_0,y_0)$ be a point on the fiber $x^n+y^n=1$. Define
                \[ 
                 \tilde{\sigma}_k: [0,1] \rightarrow U 
                 \]
                \[ 
                \tilde{\sigma}_k(t)=(\lambda(t) x_0,\lambda(t) y_0, \sigma_k(t)) 
                \]
   where $\lambda(t)^n=1-\sigma_k(t)^n=1-\sigma_0(t)^n.$ In particular $\lambda$ does not depend on $k$. One may write $\lambda(t)^n$ in polar form as $r(t)e^{ig(t)}$. Then we define $\lambda(t)=\sqrt[n]{r(t)}e^{i\frac{g(t)}{n}}$ where $\sqrt[n]{-}$ denotes the real positive $n$th root and $\lambda(0)=1$. 
    
    The path $\tilde{\sigma}_k(t)$ takes values in $U$, that is, 
   \[ 
     (\lambda(t) x_0)^n+ (\lambda(t) y_0)^n + \sigma_k(t)^n=(1-\sigma_k(t)^n)(x_0^n+y_0^n)+\sigma_k(t)^n=1.
   \]
The path $\tilde{\sigma}_k$ begins at $\tilde{\sigma}_k(0)=(x_0,y_0,0)$ and ends at $\tilde{\sigma}_k(1)=(\zeta_n x_0,\zeta_n y_0,0) \in U(\CC)$. 
(This follows either using a direct computation of $\lambda$ or from Section~\ref{Automorphisms} viewing this action inside the automorphism group of the Fermat curve $x^n+y^n=z^n$. )
  
    Hence 
   $\tilde{\sigma}_k$ induces an action on the fiber $x^n+y^n=w^n$ as
                \[ 
                (x:y:w) \mapsto (\zeta_n x, \zeta_n y: w). 
                \] 
 In particular, this action is independent of $k$.
\section{Calculation of the Monodromy}\label{monodromy}
   In Section~\ref{Fermat surfaces}, we showed that the action of each generator of the fundamental 
    group on a generic fiber is identical. 
   We will compute the monodromy by describing the action 
    of $\epsilon_0\epsilon_1$ on the basis given in Theorem~\ref{thm:cuspidal}.  
    
    Let $s$ denote the element $(1-\epsilon_0)(1-\epsilon_1)\gamma$ and let $s_{i,j}$ denote the basis elements $\epsilon_0^i\epsilon_1^js$ for $i=1,\ldots,n-2$ and $j=0,\ldots,n-2$. 
    
   \begin{theorem}\label{thm:action}
     The action of $\epsilon_0$ (resp. $\epsilon_1$) on the basis $\{s_{i,j}\}$ of $H_1(F_n,\Z)$
      is given by \[  
      s_{i,j} \mapsto s_{i+1,j} \hspace{1mm} (\text{resp}. \hspace{1mm} s_{i,j} \mapsto s_{i,j+1} )\hspace{2mm}     \text{for} \hspace{2mm} 1\leq i\leq  n-2,\hspace{2mm}            0\leq j\leq n-2.  
      \] 
 where $\epsilon_0^{n-2}\epsilon_1^j$ and $\epsilon_0^i\epsilon_1^{n-1}$ for $0\leq i,j \leq n-2$ can be expressed in terms of the basis elements via the relations
 \[ 
 \epsilon_0^j(1+ \epsilon_1 +\ldots + \epsilon_1^{n-1})s=0, \hspace{2mm}  \epsilon_1^j(1+ \epsilon_0 +\ldots + \epsilon_0^{n-1})s=0, \hspace{2mm} 
 \text{and} \hspace{2mm}  
\epsilon_0^j (1+ \epsilon_0\epsilon_1 +\ldots + \epsilon_0^{n-1}\epsilon_1^{n-1})s=0.
 \]    
   \end{theorem}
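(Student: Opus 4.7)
The identities $\epsilon_0 s_{i,j}=s_{i+1,j}$ and $\epsilon_1 s_{i,j}=s_{i,j+1}$ are tautological from the definition $s_{i,j}:=\epsilon_0^i\epsilon_1^j s$, and whenever the resulting index still lies in $\{1,\dots,n-2\}\times\{0,\dots,n-2\}$ the right-hand side is already a basis element in the sense of Theorem~\ref{thm:cuspidal}. The substance of the theorem is therefore to (i) establish the three stated families of relations on $s$, and (ii) check that those relations suffice to express every non-basis symbol $s_{n-1,j}$, $s_{0,j}$, and $s_{i,n-1}$ as a $\Z$-linear combination of basis vectors.

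For step~(i), the first two families are formal. Because $s=(1-\epsilon_0)(1-\epsilon_1)\gamma$ carries both factors $(1-\epsilon_0)$ and $(1-\epsilon_1)$, and because
\[
(1+\epsilon_k+\epsilon_k^2+\cdots+\epsilon_k^{n-1})(1-\epsilon_k)=1-\epsilon_k^n=0
\]
in $\Z[\mu_n\times\mu_n]$ for $k=0,1$, the identities $(1+\epsilon_0+\cdots+\epsilon_0^{n-1})s=0$ and $(1+\epsilon_1+\cdots+\epsilon_1^{n-1})s=0$ already hold at the level of the group ring, and remain true after multiplying by any $\epsilon_0^j$ or $\epsilon_1^j$. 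The third family requires genuine geometric input, namely \eqref{relation e}. I would rewrite that relation as $(1-\epsilon_0)\gamma=-(1-\epsilon_0\epsilon_1)\bar{\gamma}$ in $H_1(F_n,\{xyz=0\},\Z)$, multiply on the left by $N:=1+\epsilon_0\epsilon_1+\cdots+(\epsilon_0\epsilon_1)^{n-1}$ to annihilate the right-hand side via $(\epsilon_0\epsilon_1)^n=1$, and then multiply by $(1-\epsilon_1)$ to obtain $Ns=0$ in $H_1(F_n,\Z)$; multiplying further by an arbitrary power $\epsilon_0^j$ gives the stated form.

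For step~(ii), I would finish by a rank count. The three families provide $3n$ linear relations on the $n^2$ elements $\{s_{i,j}\}_{0\leq i,j\leq n-1}$, among which exactly two dependencies are visible because each family sums (over its free index) to the same element $\sum_{i,j}s_{i,j}$; this leaves $3n-2$ independent relations. Since
\[
n^2-(3n-2)=(n-1)(n-2)
\]
matches the rank of $H_1(F_n,\Z)$ established in Theorem~\ref{thm:cuspidal}, the relations cut the free $\Z$-module on $\{s_{i,j}\}$ down to precisely $H_1(F_n,\Z)$, so the $3n-2$ non-basis symbols are indeed expressible in the basis.

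The main obstacle I anticipate is in step~(ii): converting the abstract rank count into an \emph{explicit} inversion of the three families — a closed-form recipe that writes each $s_{n-1,j}$, $s_{0,j}$, and $s_{i,n-1}$ as a named $\Z$-linear combination of basis elements — demands careful simultaneous bookkeeping across the three relation types, though the rank count is by itself enough to justify the theorem as stated.
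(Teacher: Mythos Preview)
Your derivation of the three relation families in step~(i) is correct and in fact more transparent than the paper's: you observe that the first two are identities in $\Z[\mu_n\times\mu_n]$ coming from the factor $(1-\epsilon_k)$ in $s$, and you obtain the diagonal family by multiplying the relation \eqref{relation e} (rewritten as $(1-\epsilon_0)\gamma=-(1-\epsilon_0\epsilon_1)\bar\gamma$) first by $N=\sum_{k}(\epsilon_0\epsilon_1)^k$ and then by $(1-\epsilon_1)$. The paper states the three relations and invokes \eqref{relation e} for the third but does not spell out this multiplication; your version is cleaner.

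Where you diverge from the paper is in step~(ii). The paper argues constructively: from the first two families it writes $\epsilon_k^{n-1}=-\sum_{j=0}^{n-2}\epsilon_k^{j}$ (on $s$), and then manipulates the diagonal relation $\sum_k(\epsilon_0\epsilon_1)^k s=0$ together with its $\epsilon_1$-translates to exhibit, for each $1\le j\le n-2$, an explicit integral expression for $\epsilon_1^{j}s$ in terms of basis elements and $\epsilon_1^{j-1}s$; this recursion reduces everything to expressing $s=s_{0,0}$, which is then determined from the $(n-1)$ equations $\epsilon_0^{j}N s=0$, $0\le j\le n-2$. Your rank count is an attractive shortcut, but as written it has a genuine gap. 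Spotting two dependencies among the $3n$ relations shows only $\mathrm{rank}(R)\le 3n-2$, not equality; you never verify there are no further dependencies. More seriously, even if $\mathrm{rank}(R)=3n-2=\mathrm{rank}(K)$ (where $K=\ker(\Z^{n^2}\to H_1)$), over $\Z$ this gives only that $R$ has finite index in $K$, not that $R=K$. So the rank match yields expressibility of the non-basis symbols over $\Q$, but the theorem asks for the $\Z$-action on an integral basis. To close the gap you must either check that $\Z^{n^2}/R$ is torsion-free (equivalently, that a suitable $(3n-2)\times(3n-2)$ minor of your relation matrix has determinant $\pm1$), or carry out the explicit elimination the paper sketches.
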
 
  \begin{proof}
The relation given in Theorem~\ref{thm:modular symbols} translates into $(1-\epsilon_0)\gamma=(\epsilon_0\epsilon_1-1)\bar{\gamma}.$
 We observe that 
    \begin{align} \label{rel:2} \sum_{i=0}^{n-1}\epsilon_0^is=0, \hspace{2mm} \sum_{i=0}^{n-1}\epsilon_1^is=0,\hspace{2mm} \text{ and}  \hspace{2mm} \sum_{i=0}^{n-1}(\epsilon_0\epsilon_1)^is=0. \end{align}
 In other words, $(1+\epsilon_k +\epsilon_k^2+\ldots+\epsilon_k^{n-1} )$ for $k=1,2$ and $(1+\epsilon_0\epsilon_1 +\ldots+\epsilon_0^{n-1}\epsilon_1^{n-1} )$ operate as zero on $H_1(F_n, \Z)$ since $s$ is the generator of $H_1(F_n,\Z)$ as a cyclic $\Z[\mu_n \times \mu_n]$ module. The first two relations in \eqref{rel:2} gives us that 
 $\epsilon_k^{n-1}=-\sum_{j=0}^{n-2}\epsilon_k^j$ for $k=0,1$. Moreover, using the third relation in \eqref{rel:2} and the action of $\epsilon_1$ on it, we find for $1 \leq j \leq n-2$ that
    \begin{align*} 
    \epsilon_1^js=-\sum_{i=1}^{n-1}\epsilon_0^i\epsilon_1^{i+j}s &=-\Large(\sum_{i=1}^{n-2}\epsilon_0^i\epsilon_1^{i+j}\Large)s - \epsilon_0^{n-1}\epsilon_1^{n-1+j} s \\
             &=  -\sum_{i=1}^{n-2}\epsilon_0^i\epsilon_1^{i+j}s + (\sum_{i=0}^{n-2}\epsilon_0^i)\epsilon_1^{j-1}s \\
             &= -\sum_{\substack{i=1\\ i+j\neq n-1}}^{n-2}\epsilon_0^i\epsilon_1^{i+j}s + ( \sum_{i=0}^{n-2}\epsilon_0^i)\epsilon_1^{j-1}s + (\sum_{k=0}^{n-2}\epsilon_1^k)\epsilon_0^{n-j-1}s \\
             &=-\sum_{\substack{i=1\\ i+j\neq n-1}}^{n-2}\epsilon_0^i\epsilon_1^{i+j}s + ( \sum_{i=1}^{n-2}\epsilon_0^i)\epsilon_1^{j-1}s +\epsilon_0^{n-j-1} \sum_{k=0}^{n-2}\epsilon_1^ks +\epsilon_1^{j-1}s
      \end{align*}
Hence we only need to express $s$ in terms of the basis elements. In addition, the system of equations
\[ \epsilon_0^j (1+ \epsilon_0\epsilon_1 +\ldots + \epsilon_0^{n-1}\epsilon_1^{n-1})s=0
\]
for $j=0,\ldots,n-2$ determines $s$ in terms of the basis elements.
  \end{proof}  

\subsection{Example}
 Let $n=3$. Then $\epsilon_0^2=-1-\epsilon_0$ and $\epsilon_1^2=-1-\epsilon_1$. We also compute as in the proof above that 
  \begin{align*} 
   \epsilon_1s &= \epsilon_0(1+\epsilon_1)s + \epsilon_0s+s=2\epsilon_0s+\epsilon_0\epsilon_1s +s. 
  \end{align*} 
Now we find $s$ from the following equations:
\begin{align*}
1+\epsilon_0\epsilon_1 + \epsilon_0^2\epsilon_1^2&=0 \\
\epsilon_0 + \epsilon_0^2\epsilon_1+\epsilon_1^2&=0
\end{align*}
which translate to \begin{align*}
\epsilon_0-\epsilon_0\epsilon_1 - 2\epsilon_1-1&=0 \\
\epsilon_0 + 2\epsilon_0\epsilon_1+\epsilon_1 +2&=0.
\end{align*}
We obtain that $s=-\epsilon_0s-\epsilon_0\epsilon_1s$ and $\epsilon_1s=\epsilon_0s$. Hence the action of $\epsilon_0\epsilon_1$ on $\{s_{1,0},s_{1,1}\}$ is given by:
\[ \epsilon_0\epsilon_1: s_{1,0}\mapsto -s_{1,0}-s_{1,1}, \hspace{2mm} \epsilon_0\epsilon_1:s_{1,1}\mapsto s_{1,0}. 
\]
Hence the monodromy of our family is given by  \[    \begin{bmatrix} 
           0 & 1\\ -1 & -1 
           \end{bmatrix}
         \] 
     which coincides with the local monodromy of an elliptic surface with a singularity of three lines intersecting at 
     a single point given in \cite{Kodaira1}, \cite{Kodaira2},\cite{Miranda}.

\bibliographystyle{ams}

\end{document}